\numberwithin{equation}{section} % Equation numbering control.
\newcounter{mnote}
\theoremstyle{plain}
\newtheorem{theorem}{Theorem}[section]
\newtheorem{proposition}[theorem]{Proposition}
\newtheorem{lemma}[theorem]{Lemma}
\theoremstyle{definition}
\theoremstyle{remark}
\newtheorem{remark}[theorem]{Remark}
\newcommand{\field}[1]{\mathbb{#1}}
\newcommand{\nN}{\field{N}}
\newcommand{\nR}{\field{R}}
\newcommand{\vphi}{\varphi}
\newcommand{\pd}[2]{\frac{\partial #1}{\partial #2}}
\newcommand{\od}[2]{\frac{d #1}{d #2}}
\newcommand{\abs}[1]{\left\lvert#1\right\rvert}
\newcommand{\norm}[1]{\left\lVert#1\right\rVert}
\newcommand{\Lph}[1]{\text{$L^{#1}$}(\Om)}
\newcommand{\Hph}[1]{\text{$H^{#1}$}(\Om)}
\newcommand{\Wph}[2]{\text{$W^{#1,#2}$}(\Om)}
\newcommand{\Hphd}[1]{\text{$H^{#1}_0$}(\Om)}
\newcommand{\Hphp}[1]{\text{${\dot{H}}^{#1}$}(\Om)}
\newcommand{\dU}{\tilde{u}}
\newcommand{\dP}{\tilde{p}}
\newcommand{\uu}{U}
\newcommand{\p}{P}
\def\Dd{\Delta}
\def\Om{\Omega}
\def\pp{\partial}
\begin{document}
 % ---------------------- Article Information ----------------------
\title[abridged continuous data assimilation ]{Abridged continuous data assimilation for the 2D Navier-Stokes equations utilizing measurements of only one component of the velocity field}

\date{\today}

% ----------------------  Author Information ----------------------
%
\author{Aseel Farhat}
\address[Aseel Farhat]{Department of Mathematics\\
                Indiana University, Bloomington\\
        Bloomington, IN 47405, USA}
\email[Aseel Farhat]{afarhat@indiana.edu}
\author{Evelyn Lunasin}
\address[Evelyn Lunasin]{Department of Mathematics\\
              United States Naval Academy\\
      Annapolis, MD 21401, USA}
\email[Evelyn M. Lunasin]{lunasin@usna.edu}

\author{Edriss S. Titi}
\address[Edriss S. Titi]{Department of Mathematics, Texas A\&M University, 3368 TAMU,
 College Station, TX 77843-3368, USA.  {\bf ALSO},
  Department of Computer Science and Applied Mathematics, Weizmann Institute
  of Science, Rehovot 76100, Israel.} \email{titi@math.tamu.edu and
  edriss.titi@weizmann.ac.il}

%---------------------------------------------------------------
\begin{abstract}

We introduce a continuous data assimilation (downscaling) algorithm for the two-dimensional Navier-Stokes equations employing coarse mesh measurements of only one component of the velocity field. This algorithm can be implemented with a variety of finitely many observables: low Fourier modes, nodal values, finite volume averages, or finite elements.  We provide conditions on the spatial resolution of the observed data, under the assumption that the observed data is free of noise, which are sufficient to show that the solution of the algorithm approaches, at an exponential rate asymptotically in time, to the unique exact unknown reference solution, of the 2D Navier-Stokes equations, associated with the observed (finite dimensional projection of) velocity.

\end{abstract}

 \maketitle
 \noindent
 {\bf MSC Subject Classifications:} 35Q30, 93C20, 37C50, 76B75, 34D06. \\
 \noindent
{\bf Keywords:} Navier-Stokes equations, continuous data assimilation, signal synchronization, volume elements and nodes, coarse mesh measurements of only one component of the velocity field, feedback control, nudging, downscaling. \\

%--------------------------------------------------------------------
%\tableofcontents

\section{Introduction}\label{intro}
Data assimilation is a downscaling process for estimating the state of a system by synchronizing information from collected coarse mesh measured data and prediction derived from numerical forecast model. The classical method of continuous data assimilation, see, e.g., \cite{Daley}, is
to insert observational measurements directly into a model as the
latter is being integrated in time. One way to exploit this is to insert Fourier low mode observables from a time series into the equation for the evolution of the high modes. After a relatively short time $t=\bar{t}$, the solution to the equation for the high modes is close to the high modes of the exact reference solution associated with the observables. At that point the low modes and high modes can be combined to form a complete good approximation of the state of the system at time $t = \bar{t}$, which can then be used as an initial condition for a high resolution simulation. This was the approach taken for the 2D Navier-Stokes in \cite{Browning_H_K, B_L_Stuart, Henshaw_Kreiss_Ystrom,Hayden_Olson_Titi,Olson_Titi_2003, Olson_Titi_2008, Korn}. Except of the work in \cite{B_L_Stuart} (for the 3DVAR Gaussian filter), and \cite{Bessaih-Olson-Titi} (which is using the determining parameters nudging approach of this paper for data assimilation), the previously mentioned theoretical work assumed that the observational measurements are error free. Notably, the authors of \cite{Hayden_Olson_Titi} present an algorithm for data assimilation that uses discrete in space and time measurements.

In \cite{Azouani_Olson_Titi}, a new approach was introduced based on an idea from control theory \cite{Azouani_Titi}. The new approach was motivated by the fact that, unlike the case of finite Fourier modes, measured data represents the values of solution on a discrete set of nodal points are not possible to insert directly into the various terms of the evolution equations involving spatial derivatives of such solution.  In the new algorithm,  rather than inserting the observational measurements directly into the equations, the authors in \cite{Azouani_Olson_Titi} introduced a feedback control term that forces (nudges) the model towards the reference solution corresponding to the observations. This is motivated by the fact that instabilities in dissipative evolution equations occur at the large spatial scales, hence the need to control, stabilize, or nudge these coarse scales in downscaling algorithms.

The downscaling algorithm for the general setting can be formally described as follows: suppose that $u(t)$ represents a solution of some dissipative dynamical system governed by an evolution equation of the type
\begin{align}\label{dissipative}
\od{u}{t} = F(u), 
\end{align}
where the initial data $u(0)= u_0$ is missing. Let $I_h(u(t))$ represent an interpolant operator based on the observational measurements of this system at a coarse spatial resolution of size $h$, for $t\in [ 0,T ]$. The algorithm proposed in \cite{Azouani_Olson_Titi} is to construct a solution $v(t)$ taking into account the observations that satisfies the evolution equations 
\begin{subequations}\label{du}
\begin{align}
&\od{v}{t} = F(v) - \mu (I_h(v)- I_h(u)), \\
&v(0)= v_0, 
\end{align}
\end{subequations}
where $\mu>0$ is a relaxation (nudging) parameter and $v_0$ is taken to be an arbitrary initial data. Notice that if system \eqref{du} is globally well-posed and $I_h(v)$ converge to $I_h(u)$ in time, then we recover the reference $u(x,t)$ from the approximate solution $v(x,t)$. The goal is to find estimates on $\mu>0$ and $h>0$, in terms of physical parameters of the evolution equation \eqref{dissipative}, such that the approximate solution $v(t)$ is with increasing accuracy to the reference solution $u(t)$ as more continuous data in time is supplied. After some large enough time $T>0$, the solution $v(T)$ can then be used as an initial condition in system \eqref{dissipative} to make future predictions of the reference solution $u(t)$ for  $t > T$, or one can continue with \eqref{du} itself, for as long more measurements are provided.

This algorithm was designed to work for general {\it dissipative} dynamical systems of the form \eqref{dissipative}. Such systems are known to have global, in time, solutions and a finite-dimensional global attractor (for this very reason we are still unable to establish any results concerning this algorithm for the 3D Navier-Stokes equations; nevertheless, the algorithm can still be implemented and tested in practice).   Moreover, these systems are also known to have finite set of determining parameters. A projection (onto say a finite number of low Fourier modes, or other types of interpolant projections based on nodal values and volume elements) is said to be determining if, whenever the projection of two trajectories of \eqref{dissipative} on the global attractor approach each other, as $t \to \infty$, the full trajectories approach each other, see, for example,
\cite{C_O_T, F_M_R_T, Foias_Prodi, Foias_Temam, Foias_Temam_2, Foias_Titi, Holst_Titi, Jones_Titi, Jones_Titi_2} and references therein. The estimates provided on $\mu>0$ and $h>0$ use the estimates for the global existing solution in the global attractor of the system, in terms of the physical parameters, such as Reynolds number, in the context of the Navier-Stokes equations, for example.

In the context of the incompressible 2D NSE, the authors in \cite{Azouani_Olson_Titi} studied the conditions under which the approximate solution $v(t)$, obtained by this algorithm of data assimilation, converges to the reference solution $u(t)$ over time. An extension of this approach to the case when the observational data contains stochastic noise was analyzed in \cite{Bessaih-Olson-Titi}.
Under these assumptions, the data assimilation algorithm consists of a system of stochastically forced
Navier-Stokes equations. The main result established in \cite{Bessaih-Olson-Titi} gives resolution conditions which guarantee that the
limit supremum, as the time tends to infinity, of the expected value
of the $L^2$-norm of the difference between the approximating solution
and the actual reference solution; i.e. the error, is bounded by an estimate involving the variance
of the noise in the measurements and the spatial resolution of the collected data, $h$. 

In addition to understanding how the noise in the data affects the accuracy of the prediction, another major problem in data assimilation is that typically the dimension of the observation vector is less than the dimension of the model's state vector.  For example, in order to improve hurricane prediction and typhoon forecasts it is important to get the water vapor observations into the system model.   Water vapor is an important factor in the genesis of tropical cyclones \cite{Anthes2008}, but accurate and timely measurements still remain a challenge with our current observing technology (some recent technological advances in the design of atmospheric water vapor observing technology like the GPS Radio Occultation are currently underway \cite{White-Paper}). In the mean time, it is important to analyze the validity and success of a data assimilation algorithm when some state variable observations are not available as an input on the numerical forecast model.

Our main idea in this work stems from the work in \cite{FJT} which proposes a continuous data assimilation scheme to the two-dimensional incompressible B\'enard convection problem. The 2D B\'enard convection problem is given by
\begin{subequations}\label{Bous}
\begin{align}
&\pd{u}{t} - \nu\Delta u + (u\cdot\nabla)u + \nabla p = \theta \mathbf{e}_2, \label{Bous1}\\
&\pd{\theta}{t} - \kappa\Delta\theta + (u\cdot\nabla)\theta - u\cdot\mathbf{e}_2 =0, \label{Bous2}\\
&\nabla\cdot u= 0,\label{Bous_div}\\
&u(0, x,y) = u_0(x,y), \quad \theta(0, x,y)=\theta_0(x,y), 
\end{align}
\end{subequations}
with appropriate boundary conditions. Here $\nu>0$ is the fluid viscosity, $\kappa>0$ is the diffusion coefficient, $\mathbf{e}_2 =(0,1)$ is the second standard basis vector in $\nR^2$. The unknowns are the fluid velocity $u(t,x,y) = (u_1(t, x,y), u_2(t, x,y))$, the fluid pressure $p(t, x,y)$, and the scalar function $\theta(t, x,y)$, which may be the fluctuation of the density of the fluid or the temperature of the fluid. 
The authors in \cite{FJT} proposed an algorithm for the construction of $\uu(t)$ and $\eta(t)$, that approximates the velocity $u$ and the temperature fluctuations $\theta$, respectively,  from the observational measurements $I_h(u(t))$ of the two components of the velocity field  (but without needing the measurements $I_h(\theta(t))$ for the temperature fluctuations), for $t\in [0,T]$ whose evolution is given by 
\begin{subequations}\label{DA_Bous}
\begin{align}
&\pd{\uu}{t} -\nu \Delta \uu + (\uu\cdot\nabla)\uu +\nabla P = \eta\mathbf{e}_2- \mu(I_h(\uu)-I_h(u)), \\
&\pd{\eta}{t} -\kappa\Delta\eta + (\uu\cdot\nabla)\eta - \uu\cdot\mathbf{e}_2 = 0, \\
&\nabla \cdot \uu = 0, \\
&\uu(0,x,y) = \uu_0(x,y), \quad \eta(0,x,y) = \eta_0(x,y),
\end{align}
\end{subequations}
with the corresponding appropriate boundary conditions; where $\mu$ is a positive relaxation (nudging) parameter, which relaxes the coarse spatial scales of $\uu$ toward  the observed data, $P$ is the approximate pressure, and 
$\uu_0, \eta_0$ are taken to be arbitrary. Notice that this algorithm is different from the general algorithm presented in \cite{Azouani_Olson_Titi}. The algorithm in \eqref{DA_Bous} construct the approximate solutions for the velocity $u$ and temperature fluctuations $\theta$ using only the observational data, $I_h(u)$, of the velocity and without any measurements for the temperature (or density) fluctuations.  Ideally, one would like to design an algorithm based on temperature measurements only, but an algorithm is still out of reach for \eqref{Bous}.

In this work, we introduce an abridged dynamic continuous data assimilation for the 2D NSE inspired by the recent algorithms introduced in \cite{Azouani_Olson_Titi, FJT}. We establish convergence results for the improved algorithm where the observational data needed to be measured and inserted into the model equation is reduced or subsampled.  Here we handle the idealized case where the measured observational data is assumed to be free of noise and that the model parameters are exact. To be more precise, to review some relevant literature and to set some notation, we start by recalling the 2D NSE which can be written as 
\begin{subequations}\label{2D_NSE_com}
\begin{align}
\pd{u_1}{t} - \nu\Dd u_1 +  u_1\pp_x u_1 + u_2\pp_yu_1 + \pp_x p &= f_1, \\
\pd{u_2}{t} - \nu\Dd u_2 +  u_1\pp_x u_2 + u_2\pp_yu_2  + \pp_y p &= f_2, \\
\pp_xu_1 + \pp_yu_2 &=0,\label{div_u}\\
u_1(0,x,y) = u_1^0(x,y), \quad u_2(0,x,y) &= u_2^0(x,y),
\end{align}
\end{subequations}
where $(u_1(t, x,y),u_2(t,x,y))$ is the  velocity of the fluid at time $t$ and position $(x,y)\in \Omega$, $\nu>0$ represents the kinematic viscosity, $p(t,x,y)$ is the pressure and $(f_1(x,y),f_2(x,y))$ is the body force applied to the fluid, which we assume to be time independent. We will consider system \eqref{2D_NSE_com} in a physical domain $\Om$, with either no-slip boundary conditions or periodic boundary conditions. In the case of no-slip Dirichlet boundary conditions we take $u=0$ on $\pp\Om$. The domain $\Om$ is an open, bounded and connected domain of $\nR^2$ with $C^2$ boundary. In the case of periodic boundary conditions we require $u$, $p$ and $f$ to be $L$-periodic, in both $x$ and $y$ directions, with zero spatial averages over the fundamental periodic domain $\Om=[0,L]^2$.  

Inspired by \cite{FJT}, our proposed downscaling algorithm for the construction of approximate solution, $\uu(t,x,y)$  from the observational coarse measurements of the one component of the velocity, e.g., the second component, $I_h(u_2(t))$, for the reference solution $u(t,x,y)$, for $t\in [0,T]$ is given by 
\begin{subequations}\label{DA_NSE}
\begin{align}
\pd{\uu_1}{t} - \nu\Dd \uu_1 +  \uu_1\pp_x\uu_1 + \uu_2\pp_y\uu_1 + \pp_x \p &= f_1, \\
\pd{\uu_2}{t} - \nu\Dd \uu_2 +  \uu_1\pp_x\uu_2 + \uu_2\pp_y\uu_2  + \pp_y \p &= f_2 - \mu (I_h(U_2)-I_h(u_2)), \\
\pp_x\uu_1 + \pp_y\uu_2 &=0, \label{div_uu} \\
\uu_1(0,x,y) = \uu_1^0(x,y), \quad \uu_2(0,x,y) &= \uu_2^0(x,y). 
\end{align}
\end{subequations}
Here, $\mu$ is a positive nudging parameter, which relaxes (nudges) the coarse spatial scales of $\uu_2$ toward those of the observed data $I_h(u_2)$, $\p$ is the approximate pressure. A choice for $\uu_1^0$ and $\uu_2^0$ is arbitrary, and can be simply taken to be $\uu_1^0=0$ and $\uu_2^0=0$. If we knew $u_1^0$ and $u_2^0$, then we could take $\uu_1^0=u_1^0$ and  $\uu_2^0=u_2^0$ and the solution $(\uu_1,\uu_2)$ will be identically $(u_1, u_2)$, by the uniqueness of solutions of system \eqref{DA_NSE}.  But, $u_1^0$ and $u_2^0$ are not available, which is the main reason for introducing data assimilation algorithms.     We note that this algorithm requires observational measurements of only one component of the velocity vector field, horizontal $I_h(u_1(t))$ or vertical $I_h(u_2(t))$. Here, the observational measurements $I_h(u_2(t))$ were chosen as an example. 

We will consider interpolant observables given by linear interpolant operators $I_h: \Hph{1} \rightarrow \Lph{2}$, that approximate identity and satisfy the approximation property 
\begin{align}\label{app} 
\norm{\varphi - I_h(\varphi)}_{\Lph{2}} \leq \gamma_0h\norm{\varphi}_{\Hph{1}}, 
\end{align}
for some positive constant $\gamma_0$ and for every $\varphi$ in the Sobolev space $\Hph{1}$.   
We also consider a second type of interpolant  observables given by linear interpolant operators $I_h: \Hph{2}\rightarrow\Lph{2}$, that satisfy the approximation property 
\begin{align}\label{app2} 
\norm{\varphi - I_h(\varphi)}_{\Lph{2}} \leq \gamma_1h\norm{\varphi}_{\Hph{1}} + \gamma_2h^2\norm{\varphi}_{\Hph{2}}, 
\end{align}
for some positive constants $\gamma_1, \ \gamma_2$ and for every $\varphi$ in the Sobolev space $\Hph{2}$. One example of an interpolant observable that satisfies \eqref{app} is the orthogonal projection onto the low Fourier modes with wave numbers $k$ such that $|k|\leq 1/h$. A more physical example are the volume elements that were studied in \cite{Jones_Titi}. An example of an interpolant observable that satisfies \eqref{app2} is given by the measurements at a discrete set of nodal points in $\Omega$ (see Appendix A in \cite{Azouani_Olson_Titi}). We will call the interpolants that satisfy \eqref{app} and \eqref{app2} of type I and type II, respectively.

We provide explicit estimates on the spatial resolution $h$ of the observational measurements and the relaxation (nudging) parameter $\mu$, in terms of physical parameters, that are needed in order for the proposed downscaling algorithm to recover the reference resolution.
While the typical scenario in data assimilation is to choose $\mu$ depending on $h$, in our convergence analysis we choose our parameters $\mu$ and $h$ to depend on physical parameters. More explicitly, we choose $\mu$ to depend on the bounds of the solution on the global attractor of the system and then choose $h$ to depend on $\mu$ and the physical parameters. The philosophy here is that in order to prove the convergence theorems, we need to have a complete resolution of the flow, so $h$ has to depend on the physical parameters a.k.a the Grashof (Reynolds) number. Numerical simulations in \cite{Gesho} (see also \cite{Hayden_Olson_Titi}) have shown that, in the absence
of measurements errors, the continuous data assimilation algorithm \eqref{du} for the 2D Navier-Stokes equations performs much better than analytical estimates in \cite{Azouani_Olson_Titi} would suggest. This was also noted in a different context in \cite{Olson_Titi_2003} and \cite{Olson_Titi_2008}. It is likely that the data assimilation algorithm studied in this paper will also perform much better than suggested by the analytical results, i.e. under more relaxed conditions than those assumed in the rigorous estimates. This is a subject of future work. 

We can extend the corresponding convergence analysis for the 2D B\'enard equation,  where the approximate solutions  constructed using observations in {\it only one component of the two-dimensional velocity field and without any measurements on the temperature}, converge in time to the reference solution of the 2D B\'enard system. This will be a progression of a recent result in \cite{FJT} where convergence results were established, given that observations are known at discrete points on {\it all} of the components of the velocity field and without any measurements of the temperature. The proposed data assimilation algorithm can also be applied to several three-dimensional subgrid scale turbulence models. In \cite{ALT2014}, it was shown that approximate solutions constructed using observations on all three components of the unfiltered velocity field converge in time to the reference solution of the 3D NS-$\alpha$ model.   Morever, in \cite{MTT} a similar data assimilation algorithm was introduced for the 3D Brinkman-Forchheimer-Darcy model for flow in porous media.   We give a progression to this scheme and propose that one can show a sharpened results that the approximate solutions  constructed using observations in {\it only any two components and without any measurements on the third component of the velocity field} converge in time to the reference solution for this model. The analysis for both applications are work in progress. Notably, a similar algorithm for stochastically noisy data is at hand combining ideas from the present work and \cite{Bessaih-Olson-Titi}. 

In the section \ref{pre}, we lay out the functional setting commonly used in the mathematical study of the Navier-Stokes equations. The main results are in sections \ref{conv_typeI} and \ref{conv_typeII}. We find estimates on the adequate resolution in the observational data $h$ and the relaxation parameter $\mu$ for observational measurements that satisfy \eqref{app} and \eqref{app2}, separately, in the cases of Dirichlet and periodic boundary conditions. We prove the well-posedness of system \eqref{DA_NSE}
as well as the convergence (at an exponential rate) of the approximate solution $v(t)$ of \eqref{DA_NSE}  to the reference solution $u(t)$ to the 2D Navier-Stokes equations \eqref{2D_NSE_com}. 
\bigskip

%---------------------------------------------------------------------
\section{Preliminaries}\label{pre}

For the sake of completeness, this section presents some preliminary material and notation commonly used in the mathematical study of hydrodynamics models, in particular in the study of the Navier-Stokes equations (NSE) and the Euler equations. For more detailed discussion on these topics, we refer the reader to, e.g., \cite{Constantin_Foias_1988}, \cite{Robinson}, \cite{Temam_1995_Fun_Anal} and \cite{Temam_2001_Th_Num}. 

In the two-dimensional case with no-slip Dirichlet boundary conditions, let $\Omega$ be an open, bounded and connected domain with $C^2$ boundary. We define $\mathcal{V}$ to be the set of divergence free and compactly supported $C^\infty$ vector fields from $\Om\subset\nR^2 \rightarrow \nR^2$. In the case of periodic boundary conditions, let $\Omega= [0,L]^2$ for some fixed $L>0$,  we define $\mathcal{V}$ to be the set of all $L$-periodic trigonometric polynomials from $\nR^2$ to $\nR^2$ that are divergence free and have zero averages. We denote by $\Lph{p}$, $\Wph{s}{p}$, and $\Hph{s}\equiv \Wph{s}{2}$ to be the usual Sobolev spaces in two-dimensions. We will denote by $H$ and $V$ the closure of $\mathcal{V}$ in $\Lph{2}$ and $\Hph{1}$, respectively. 
We also denote by $\Hphd{1}$ the set of $\Hph{1}$ functions with zero traces at the boundary $\pp \Omega$.  In the periodic case we denote by $\Hphp{1}$ the subspace of $\Hph{1}$ of functions that are periodic with zero average. 

We define the inner products on $\Lph{2}$ and $H$ by
\[(u,w)=\sum_{i=1}^2\int_{\Om} u_iw_i\,dxdy,
\]
and the associated norm $\norm{u}_{\Lph{2}}=(u,u)^{1/2}$. Notice that $V$, $\Hphd{1}$ and $\Hphp{1}$ are Hilbert spaces with the inner product 
\[((u,w))=\sum_{i,j=1}^2\int_{\Om}\partial_ju_i\partial_jw_i\,dxdy, 
\]
with the associated $\norm{u}_{\Hph{1}}=((u,u))^{1/2}= \norm{\nabla u}_{\Lph{2}}$. Note that $((\cdot,\cdot))$ defines a norm due to the Poincar\'e inequality \eqref{poincare}, below. 

\begin{remark}
We will use these notations indiscriminately for both scalars and vectors, which should not be a source of confusion. 
\end{remark}

We denote the dual of $V$ by $V^{'}$. Define the Leray projector $P_\sigma$ as the orthogonal projection from $\Lph{2}$ onto $H$, and define the Stokes operator $A:V \rightarrow V^{'}$ is given by 
$$ Au = -P_\sigma \Delta u, $$
with domain  $\mathcal{D}(A)= V \cap \Hph{2}$. The linear operator $A$ is self-adjoint and positive definite with compact inverse $A^{-1}: H \rightarrow H$. Thus, there exists a complete orthonormal set of eigenfunctions $w_i$ in $H$ such that $Aw_i= \lambda_iw_i$ where $0<\lambda_1\leq\lambda_2\leq . . . \leq\lambda_i\leq\lambda_{i+1}\leq . . . $ for $i\in \nN$. 

We also recall the Poincar\'e inequalities:
\begin{subequations}\label{poincare}
\begin{enumerate}
\item For all $\vphi \in V$:
\begin{align}\label{poincare_1}
\|\vphi\|_{\Lph{2}}^2\leq \lambda_1^{-1}\|\nabla\vphi\|_{\Lph{2}}^2,  
\end{align}
\item for all $\vphi \in \mathcal{D}(A)$: 
\begin{align}\label{poincare_2}
\|\nabla\vphi\|_{\Lph{2}}^2\leq \lambda_1^{-1}\|\Delta\vphi\|_{\Lph{2}}^2, 
\end{align}
\end{enumerate}
\end{subequations}
where $\lambda_1$ is the smallest eigenvalue of the operator $A$ in two-dimensions, subject to the relevant boundary conditions. 

Let $Y$ be a Banach space.  We denote by $L^p([0,T];Y)$ the space of (Bochner) measurable functions $t\mapsto w(t)$, where $w(t)\in Y$, for a.e. $t\in[0,T]$, such that the integral $\int_0^T\|w(t)\|_Y^p\,dt$ is finite. 

Hereafter, $c$, $c_L$ and $c_T$ will denote universal dimensionless positive constants. Our estimates for the nonlinear terms will involve the Ladyzhenskaya's inequality in two-dimensions for an integrable function $\vphi \in V$: 
\begin{equation}\label{L4_to_H1}
\|\vphi\|_{\Lph{4}}^2\leq c_L\|\vphi\|_{\Lph{2}}\|\nabla\vphi\|_{\Lph{2}}. 
\end{equation}

\begin{remark}
We note that the Poincar\'e inequality \eqref{poincare_1} and the Ladyzenskaya inequality \eqref{L4_to_H1} hold for $\varphi \in \Hphd{1}$ or $\varphi \in \Hphp{1}$. Similarly the Poincar\'e inequality \eqref{poincare_2} holds for $\varphi \in \Hph{2}\cap \Hphd{1}$ or $\varphi \in \Hph{2}\cap \Hphp{1}$. We will use these versions of the Poincar\'e inequality and the Ladyzenskaya inequality for the components of the velocity vector field. 
\end{remark} 

Also, we will use the following logarithmic estimates for the nonlinear term in two-dimensions. These estimates are the analogue of the logarithmic estimates proved in \cite{Titi_1986}, for the advection nonlinear term of the Navier-Stokes equations; and they can be proved following the same steps of the proof in \cite{Titi_1986}. 
\begin{subequations}
\begin{enumerate}
\item 
For every $u, v, w \in \Hphd{1}$ (or $\Hphp{1}$), with $w\neq 0$, we have  
\begin{align}\label{titi_1}
& \abs{\int_\Omega u\,\pp_iv\, w\, dxdy} \leq \notag \\
& \quad c_T\norm{\nabla u}_{\Lph{2}}\norm{\nabla v}_{\Lph{2}}\norm{w}_{\Lph{2}}\left(1 + \log \left(\frac{\norm{\nabla w}_{\Lph{2}}}{\lambda_1^{1/2}\norm{w}_{\Lph{2}}}\right)\right)^{1/2}. 
\end{align}
\item For every $u \in \Hphd{1}$ and $v, w \in \Hph{2}\cap\Hphd{1}$ (or $u \in \Hphp{1}$ and $v, w \in \Hph{2}\cap\Hphp{1}$), with $v\neq 0$, we have  
\begin{align}\label{titi_2}
&  \abs{\int_\Omega u\, \pp_iv\, \pp_{jj}w\, dxdy} \leq \notag \\
&\quad c_T\norm{\nabla u}_{\Lph{2}}\norm{\nabla v}_{\Lph{2}}\norm{\Delta w}_{\Lph{2}}\left(1 + \log \left(\frac{\norm{\Delta v}_{\Lph{2}}}{\lambda_1^{1/2}\norm{\nabla v}_{\Lph{2}}}\right)\right)^{1/2}. 
\end{align}
\item For every $v\in \Hphd{1}$ and $u, w \in \Hph{2}\cap \Hphd{1}$ (or $u \in \Hphp{1}$ and $v, w \in \Hph{2}\cap\Hphp{1}$), with $u\neq 0$, we have  
\begin{align}\label{titi_3}
& \abs{\int_\Omega u\, \pp_iv\, \pp_{jj}w\, dxdy}\leq \notag \\
&\quad c_T\norm{\nabla u}_{\Lph{2}}\norm{\nabla v}_{\Lph{2}}\norm{\Delta w}_{\Lph{2}}\left(1 + \log \left(\frac{\norm{\Delta u}_{\Lph{2}}}{\lambda_1^{1/2}\norm{\nabla u}_{\Lph{2}}}\right)\right)^{1/2},  
\end{align}
\end{enumerate}
\end{subequations} 
where $\partial_i$ is interchangeable with $\pp_x$ or $\pp_y$, and $\pp_{jj}$ is interchangeable with $\pp_{xx}$ or $\pp_{yy}$.  We note that the logarithmic estimate \eqref{titi_3} can follow by an argument using the Brezis-Gallouet logarithmic inequality \cite{Brezis_Gallouet_1980}. In \cite{Titi_1986}, it was proven using a different approach. On the other hand, the logarithmic estimate \eqref{titi_2} does not follow as a consequence of the Brezis-Gallouet inequality (see \cite{Titi_1986} for the proof). 

We recall that in two dimensions and in the case of periodic boundary conditions the nonlinearity also satisfies 
\begin{subequations}
\begin{align}\label{per_orth}
((u\cdot\nabla)u, \Delta u) = 0, 
\end{align}
for each $u\in \mathcal{D}(A)$ and consequently 
\begin{align}\label{per_orth_2}
((u\cdot\nabla)w,\Delta w) + ((w\cdot\nabla)u, \Delta w) + ((w\cdot\nabla)w, \Delta u) = 0, 
\end{align}
\end{subequations}
for each $u$ and $w\in \mathcal{D}(A)$.

Furthermore, inequality \eqref{app} implies that
\begin{align}\label{app_F}
 \norm{w-I_h(w)}_{\Lph{2}}^2\leq c_0^2h^2\norm{\nabla w}_{\Lph{2}}^2,
\end{align}
for every $w\in V,\;  \Hphd{1}$, or $\Hphp{1}$, where $c_0=\gamma_0$, and respectively, \eqref{app2} implies that
\begin{align}\label{app2_F}
\norm{w-I_h(w)}_{\Lph{2}}^2\leq \frac{1}{2}c_0^2h^2\norm{\nabla w}_{\Lph{2}}^2+ \frac14c_0^4h^4\norm{\Delta w}_{\Lph{2}}^2, 
\end{align}
for every $w\in \mathcal{D}(A), \; \Hphd{1}\cap\Hph{2}$, or $\Hphp{1}\cap\Hph{2}$, for some $c_0>0$ that depends only on $\gamma_0$, $\gamma_1$ and $\gamma_2$. We note that in the case of periodic boundary conditions, we demand that the spatial average of $I_h(w)$ to be zero, for every $w$ in the relevant domain of $I_h$ (c.f. \cite{Azouani_Olson_Titi}). This is to guarantee that the spatial average of the solution $\uu$ of \eqref{DA_NSE} is preserved, and hence can be chosen to be zero. 

We will use the following elementary inequality proved in \cite{Azouani_Olson_Titi}.
\begin{lemma}\label{log_prop}
Let $\phi(r) = r - \gamma(1+\log(r))$, where $\gamma>0$. Then
\begin{align*}
\min\{\phi(r): r\geq 1\} \geq -\gamma \log(\gamma). 
\end{align*}
\end{lemma}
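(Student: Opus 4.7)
The plan is to minimize $\phi(r) = r - \gamma(1 + \log r)$ over $r \geq 1$ by a direct one-variable calculus argument and then verify the claimed lower bound in each regime. First I would compute $\phi'(r) = 1 - \gamma/r$ and $\phi''(r) = \gamma/r^2 > 0$, so $\phi$ is strictly convex on $(0, \infty)$ and admits a unique unconstrained minimizer at $r_\ast = \gamma$, with value $\phi(\gamma) = \gamma - \gamma(1 + \log \gamma) = -\gamma \log \gamma$.

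The argument then splits into two cases according to whether the unconstrained minimizer lies in the admissible interval $[1, \infty)$. If $\gamma \geq 1$, then $r_\ast = \gamma \in [1, \infty)$, and the minimum of $\phi$ over $[1, \infty)$ coincides with the unrestricted minimum, giving $\min_{r \geq 1} \phi(r) = -\gamma \log \gamma$ with equality. If $0 < \gamma < 1$, then for every $r \geq 1 > \gamma$ we have $\phi'(r) = 1 - \gamma/r > 0$, so $\phi$ is strictly increasing on $[1, \infty)$ and attains its minimum at the left endpoint, where $\phi(1) = 1 - \gamma$.

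It remains to verify the inequality $1 - \gamma \geq -\gamma \log \gamma$ in the regime $0 < \gamma < 1$, i.e., that $g(\gamma) := 1 - \gamma + \gamma \log \gamma \geq 0$ on $(0, 1]$. I would note that $g(1) = 0$ and $g'(\gamma) = \log \gamma < 0$ for $\gamma \in (0, 1)$, so $g$ is strictly decreasing on $(0, 1)$; combined with $g(1) = 0$, this forces $g(\gamma) > 0$ on $(0, 1)$, which closes the case. There is no substantive obstacle in the proof; the only point requiring a bit of care is distinguishing the two regimes for $\gamma$ and verifying the endpoint inequality explicitly in the subcritical case $\gamma < 1$, since the bound stated in the lemma is then a genuine inequality rather than an equality.
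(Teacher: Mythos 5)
Your proof is correct. Note that the paper itself does not prove this lemma; it simply quotes it as ``the following elementary inequality proved in \cite{Azouani_Olson_Titi}'', so there is no in-paper argument to compare against, but your calculus argument is the standard one and every step checks out: the convexity of $\phi$, the critical point at $r_\ast=\gamma$ with value $\phi(\gamma)=-\gamma\log\gamma$, and the endpoint analysis in the regime $0<\gamma<1$ via the auxiliary function $g(\gamma)=1-\gamma+\gamma\log\gamma$ are all handled properly.

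One small remark: the case split is more work than necessary. Since $\phi$ is convex on $(0,\infty)$ with its unique global minimizer at $r_\ast=\gamma$, you already have $\phi(r)\geq \phi(\gamma)=-\gamma\log\gamma$ for \emph{every} $r>0$, and hence a fortiori
\begin{align*}
\min\{\phi(r): r\geq 1\} \;\geq\; \inf\{\phi(r): r> 0\} \;=\; -\gamma\log(\gamma),
\end{align*}
with no need to examine where the constrained minimum is actually attained or to verify $1-\gamma\geq-\gamma\log\gamma$ separately. Your subcritical-case computation does yield the sharper value $\min_{r\geq 1}\phi(r)=1-\gamma$ when $\gamma<1$, which is strictly better than $-\gamma\log\gamma$, but the lemma only asserts the weaker one-sided bound, so the extra precision is not needed for the application.
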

We will also apply the following uniform Gronwall's inequality proved in \cite{Jones_Titi}. 
\begin{lemma}\label{gen_gron_2}[Uniform Gronwall's inequality] Let $\tau>0$ be arbitrary but fixed. Suppose that $Y(t)$ is an absolutely continuous function which is locally integrable and that it satisfies the following:
\begin{align*}
\od{Y}{t} + \tilde{\beta}(t)Y \leq 0,\qquad \text{ a.e. on } (0,\infty), 
\end{align*}
and 
\begin{align}\label{uniform_conditions}
\liminf_{t\rightarrow\infty} \int_t^{t+\tau} \tilde{\beta}(s)\,ds \geq \gamma, \qquad 
\limsup_{t\rightarrow\infty} \int_t^{t+\tau} \tilde{\beta}^{-}(s)\,ds < \infty, 
\end{align}
for some $\gamma>0$, where $\tilde{\beta}^{-} = \max\{-\tilde{\beta}, 0\}$.  
Then, $Y(t)\rightarrow 0$ at an exponential rate, as $t\rightarrow \infty$. 
\end{lemma}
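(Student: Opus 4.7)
The strategy is the standard reduction via an integrating factor, followed by a partition argument that converts the asymptotic integral conditions on $\tilde\beta$ into a linear lower bound on the exponent.

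First, I would fix $T_0 \geq 0$ and multiply the differential inequality by the integrating factor $\exp\!\bigl(\int_{T_0}^{t}\tilde\beta(s)\,ds\bigr)$. Since $Y$ is absolutely continuous and $\tilde\beta$ is locally integrable, this produces
\[
\frac{d}{dt}\!\left(Y(t)\exp\!\Bigl(\int_{T_0}^{t}\tilde\beta(s)\,ds\Bigr)\right) \leq 0 \quad \text{a.e.\ on } (T_0,\infty),
\]
and integration from $T_0$ to $t$ yields
\[
Y(t) \leq Y(T_0)\exp\!\left(-\int_{T_0}^{t}\tilde\beta(s)\,ds\right).
\]
The claim is thereby reduced to showing that $\int_{T_0}^{t}\tilde\beta(s)\,ds$ grows at least linearly in $t$, provided $T_0$ is taken sufficiently large.

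Next, I would fix any $\gamma' \in (0,\gamma)$. The liminf hypothesis produces some $T_0$ with $\int_{t}^{t+\tau}\tilde\beta(s)\,ds \geq \gamma'$ for all $t \geq T_0$, and the limsup hypothesis (enlarging $T_0$ if necessary) gives a constant $M>0$ such that $\int_{t}^{t+\tau}\tilde\beta^{-}(s)\,ds \leq M$ for all $t \geq T_0$. For $t \geq T_0$, I write $t = T_0 + n\tau + r$ with $n$ a nonnegative integer and $r \in [0,\tau)$, and split
\[
\int_{T_0}^{t}\tilde\beta(s)\,ds = \sum_{k=0}^{n-1}\int_{T_0+k\tau}^{T_0+(k+1)\tau}\tilde\beta(s)\,ds \;+\; \int_{T_0+n\tau}^{t}\tilde\beta(s)\,ds.
\]
Each term in the first sum is at least $\gamma'$, and the tail term is bounded below by $-\int_{T_0+n\tau}^{T_0+(n+1)\tau}\tilde\beta^{-}(s)\,ds \geq -M$. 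Using $n \geq (t-T_0)/\tau - 1$, this yields the linear lower bound $\int_{T_0}^{t}\tilde\beta(s)\,ds \geq (\gamma'/\tau)(t-T_0) - (M+\gamma')$, which, inserted in the integrating-factor estimate, gives the exponential decay $Y(t) \leq Y(T_0)\, e^{M+\gamma'}\, e^{-(\gamma'/\tau)(t-T_0)}$.

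The main technical point, and the only place where both hypotheses must be used together, is the treatment of the tail piece $\int_{T_0+n\tau}^{t}\tilde\beta$ over a window of length $r < \tau$. The liminf alone controls only complete $\tau$-windows; without the uniform limsup bound on $\tilde\beta^{-}$, a highly negative excursion of $\tilde\beta$ concentrated in such a tail interval could in principle spoil the cumulative lower bound and defeat any exponential decay rate. The limsup hypothesis is precisely what rules this out, after which everything else is routine bookkeeping.
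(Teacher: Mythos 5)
Your proof is correct; the paper does not prove this lemma itself but cites it to Jones--Titi, and your integrating-factor argument combined with the partition into $\tau$-windows (using the liminf bound on full windows and the $\tilde{\beta}^{-}$ bound on the incomplete tail window) is exactly the standard proof given there. The only implicit hypothesis worth flagging is $Y\ge 0$ --- true in every application in this paper, where $Y$ is a squared norm --- since without it the one-sided estimate $Y(t)\le Y(T_0)\exp\left(-\int_{T_0}^{t}\tilde{\beta}(s)\,ds\right)$ would not by itself force $Y(t)\to 0$.
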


Here we denote by $G$ the Grashof number in two-dimensions
\begin{align}\label{Grashof_2}
G = \frac{1}{\nu^2\lambda_1} \norm{f}_{\Lph{2}}. 
\end{align}

We recall that the 2D NSE \eqref{2D_NSE_com} are well-posed and posses a finite-dimensional global attractor when $f$ is time-independent, see e.g., \cite{Constantin_Foias_1988}, \cite{Robinson}, \cite{Temam_1997}. Next, we give bounds on solutions $u$ of \eqref{2D_NSE_com} that we will use later in our analysis. These bounds are proved in the references listed above.
The estimate \eqref{jolly} is proved in \cite{DFJ4} and estimate \eqref{dirichlet_H2} will be proved in Appendix A. 

\begin{proposition}\label{unif_bounds_2D_NSE}
Let $\tau>0$ be arbitrary, and let $G$ be the Grashof number given in \eqref{Grashof_2}. Suppose that $u$ is a solution of \eqref{2D_NSE_com} subject to no-slip Drichlet boundary conditions, then there exists a time $t_0>0$ such that for all $t\geq t_0$ we have
\begin{subequations}
\begin{align}\label{dirichlet_L2}
\norm{u(t)}_{H}^2 \leq 2\nu^2G^2, \quad \quad \int_t^{t+\tau} \norm{\nabla u(s)}_{\Lph{2}}^2\,ds \leq 2(1+\tau\nu\lambda_1)\nu G^2,
\end{align} 
\begin{align}\label{dirichlet_H1}
\norm{\nabla u(t)}_{\Lph{2}}^2 \leq {\tilde c}\nu^2\lambda_1G^2e^{G^4},  
\end{align}
\begin{align}\label{dirichlet_int_H2}
\int_{t}^{t+\tau}\norm{\Delta u(s)}_{\Lph{2}}^2\, ds \leq ({\tilde c}e^{G^4} + \tau \nu\lambda_1)\nu\lambda_1 G^2,  
\end{align}
and 
\begin{align}\label{dirichlet_H2}
\norm{\Delta u(t)}_{\Lph{2}}^2 \leq {\tilde c} \nu^2\lambda_1^2G^2\left(1+(1+ G^2e^{G^4})(1+e^{G^4}+G^4e^{G^4}) \right), 
\end{align}
\end{subequations}
for some positive non-dimensional constant ${\tilde c}$. 
In the case of periodic boundary conditions we have 
\begin{subequations}
\begin{align}\label{per_est}
\norm{\nabla u(t)}_{\Lph{2}}^2 \leq 2\nu^2\lambda_1G^2, \quad \int_t^{t+\tau} \norm{\Delta u(s)}_{\Lph{2}}^2\,ds \leq 2(1+\tau\nu\lambda_1)\nu \lambda_1G^2,
\end{align}
and 
\begin{align}\label{jolly}
\norm{\Delta u(t)}_{\Lph{2}}^2 \leq {\tilde c}\nu^2\lambda_1^2(1+G)^4. 
\end{align}
\end{subequations}
\end{proposition}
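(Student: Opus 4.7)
The plan is to obtain these bounds by iterating standard energy and enstrophy estimates, combined with the uniform Gronwall inequality of Lemma \ref{gen_gron_2}, and then to address separately the more delicate $H^2$ estimate \eqref{dirichlet_H2} in the no-slip case, where the periodic orthogonality \eqref{per_orth} is not available.

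I would first take the $L^2$ inner product of \eqref{2D_NSE_com} with $u$. Since $\nabla\cdot u=0$ and $u$ either vanishes on $\partial\Omega$ or is periodic, the convective term integrates out, and after Cauchy--Schwarz, Young, and Poincar\'e \eqref{poincare_1} I obtain $\tfrac{d}{dt}\|u\|_{L^2}^2 + \nu\|\nabla u\|_{L^2}^2 \leq \|f\|_{L^2}^2/(\nu\lambda_1)$. Classical Gronwall together with the definition \eqref{Grashof_2} of $G$ yields the asymptotic $L^2$ bound in \eqref{dirichlet_L2}, and integrating the same inequality over $[t,t+\tau]$ produces the accompanying time-averaged $H^1$ bound. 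For the periodic estimates \eqref{per_est} I would repeat the procedure one derivative higher: test with $Au=-\Delta u$, use \eqref{per_orth} to annihilate the convective term, and apply Gronwall together with an integration over $[t,t+\tau]$ to recover both bounds in \eqref{per_est}.

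In the Dirichlet case, testing with $Au$ leaves the nonlinear term $((u\cdot\nabla)u,Au)$, which I would estimate by two applications of Ladyzhenskaya \eqref{L4_to_H1} as $|((u\cdot\nabla)u,Au)| \leq c\|u\|^{1/2}\|\nabla u\|\,\|Au\|^{3/2}$ and absorb the $\|Au\|^{3/2}$ factor into $\nu\|Au\|^2$ using Young's inequality. The resulting Riccati-type differential inequality $\tfrac{d}{dt}\|\nabla u\|^2 + \nu\|Au\|^2 \leq c \nu^{-3}\|u\|^2\|\nabla u\|^4 + c\|f\|^2/\nu$ is then handled by the uniform Gronwall lemma: the time integral of the coefficient $c\nu^{-3}\|u\|^2\|\nabla u\|^2$ is controlled by $C(1+\tau\nu\lambda_1)G^4$ using the already-established $L^2$ and time-averaged $H^1$ bounds, which is precisely where the $e^{G^4}$ factor in \eqref{dirichlet_H1} originates. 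Integrating this same inequality over $[t,t+\tau]$ and inserting the now-established bound on $\|\nabla u\|$ produces \eqref{dirichlet_int_H2}. Estimate \eqref{jolly} I would quote directly from \cite{DFJ4}; its sharper polynomial-in-$G$ bound is specific to the periodic case, where \eqref{per_orth} again removes the worst nonlinear contribution.

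The main obstacle is \eqref{dirichlet_H2}, because in the no-slip setting \eqref{per_orth} fails and a direct enstrophy-type estimate at the $H^2$ level cannot be closed without uncontrolled boundary contributions. My plan is to work instead at the level of the time derivative $u_t$: differentiate \eqref{2D_NSE_com} in time, take the $L^2$ inner product with $u_t$, and use the logarithmic estimate \eqref{titi_1} to control the terms of the form $|((u_t\cdot\nabla)u,u_t)|$ in terms of the already-bounded $\|\nabla u\|$ and of $\|\nabla u_t\|$. This yields a differential inequality for $\|u_t\|^2$ to which a further application of Lemma \ref{gen_gron_2}, with the required absorbing interval supplied by the time-averaged bound \eqref{dirichlet_int_H2}, delivers an asymptotic bound on $\|u_t\|$. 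Finally, reading the equation as $\nu Au = -u_t - P_\sigma(u\cdot\nabla)u + P_\sigma f$ and estimating the nonlinearity by $\|(u\cdot\nabla)u\|_{L^2}\leq c\|u\|_{L^4}\|\nabla u\|_{L^4}$ via \eqref{L4_to_H1} (using the bounds on $\|u\|$ and $\|\nabla u\|$ from the earlier steps) produces \eqref{dirichlet_H2}. The double-product factor $(1+G^2 e^{G^4})(1+e^{G^4}+G^4 e^{G^4})$ reflects the two successive exponentiations: one from the enstrophy estimate which feeds the $u_t$-equation, and a second from applying the Gronwall lemma to that equation itself.
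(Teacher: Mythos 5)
Your overall architecture matches the paper's: estimates \eqref{dirichlet_L2}--\eqref{dirichlet_int_H2} and \eqref{per_est} are the standard energy/enstrophy bounds (the paper simply cites \cite{Constantin_Foias_1988}, \cite{Robinson}, \cite{Temam_1997} for them), \eqref{jolly} is quoted from \cite{DFJ4}, and the only estimate the paper actually proves is \eqref{dirichlet_H2}, in Appendix A, by exactly your strategy: bound $\|u_t\|_{\Lph{2}}$ first, then read $\nu\|\Delta u\|_{\Lph{2}}\leq \|u_t\|_{\Lph{2}}+\|(u\cdot\nabla)u\|_{\Lph{2}}+\|f\|_{\Lph{2}}$ off the equation. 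One stylistic difference: for the term $\left(\left(u_t\cdot\nabla\right)u,u_t\right)$ the paper uses only Ladyzhenskaya \eqref{L4_to_H1}, giving $c_L\|\nabla u\|_{\Lph{2}}\|u_t\|_{\Lph{2}}\|\nabla u_t\|_{\Lph{2}}$; your proposal to use the logarithmic estimate \eqref{titi_1} would drag in an extra logarithmic factor in $\|\nabla u_t\|_{\Lph{2}}/\|u_t\|_{\Lph{2}}$ that you would then have to absorb via Lemma \ref{log_prop} --- workable, but unnecessary here.

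The one genuine misstep is your appeal to Lemma \ref{gen_gron_2} to close the $u_t$ estimate. That lemma applies to inequalities of the form $\frac{dY}{dt}+\tilde\beta(t)Y\leq 0$ and yields decay of $Y$; the inequality you obtain for $Y=\|u_t\|_{\Lph{2}}^2$ has the opposite sign, namely $\frac{dY}{dt}\leq \frac{c}{\nu}\|\nabla u\|_{\Lph{2}}^2\,Y$, so what is needed is the classical uniform Gronwall lemma in the Foias--Prodi/Temam form: if $y'\leq gy+h$ with $\int_t^{t+\tau}g$, $\int_t^{t+\tau}h$ and $\int_t^{t+\tau}y$ all bounded, then $y(t+\tau)$ is bounded. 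The paper implements this by hand, integrating first over $[s,t+\tau]$ and then again in $s$ over $[t,t+\tau]$. Relatedly, the time average you need is $\int_t^{t+\tau}\|u_t(s)\|_{\Lph{2}}^2\,ds$, which does not come directly from \eqref{dirichlet_int_H2}; you must first bound $\|u_t\|_{\Lph{2}}$ pointwise from the equation via \eqref{non_est} and then integrate, using \eqref{dirichlet_L2}, \eqref{dirichlet_H1} and \eqref{dirichlet_int_H2} together. With those two repairs your argument coincides with the paper's Appendix A.
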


\begin{remark}\label{t0_remark}
In this work, we will assume that the reference solution of the Navier-Stokes equations, that we are trying to approximate, has evolved enough in time to satisfy the estimates provided in the above proposition. That is, we will assume that the solution satisfies these estimates at $t=0$. For that reason, we will take, without loss of generality, $t_0=0$.
\end{remark}
\bigskip
%---------------------------------------------------------------------
\section{Convergence analysis with observable data of type I}\label{conv_typeI}
%----------------------------------------------------------------------

Following similar techniques introduced for the two-dimensional Navier-Stokes equations (see, e.g., \cite{Constantin_Foias_1988, Robinson, Temam_2001_Th_Num}), we can prove the global well-posedness of system \eqref{DA_NSE} as stated below, when the observable data satisfy \eqref{app}. For more details, see \cite{Azouani_Olson_Titi}.

\begin{theorem}\label{exist_uniq_NSE_1}[Well-posedness of solutions] Suppose $I_h$ satisfy \eqref{app} with $\mu>0$ and $h>0$ are chosen such that $\mu c_0^2h^2\leq \nu$, where $c_0$ is the constant in \eqref{app}. If the initial data $\uu_0 \in V$, then the continuous data assimilation system \eqref{DA_NSE}, subject to Dirichlet or periodic boundary conditions, possess a unique global strong solution $\uu(t, x,y)= (\uu_1(t, x,y), \uu_2(t , x,y))$ that satisfies
\begin{align*}
\uu \in C([0,T]; V) \cap L^2([0,T];{\mathcal{D}(A)}), \quad \text{and} \quad \od{\uu}{t} \in L^2([0,T];H). 
\end{align*}
Moreover, the solution $\uu(t, x,y)$ depends continuously on the initial data $\uu_0$. 
\end{theorem}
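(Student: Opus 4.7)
The plan is to follow the classical Faedo–Galerkin approach used for the 2D NSE (essentially as in \cite{Azouani_Olson_Titi, Constantin_Foias_1988, Temam_2001_Th_Num}), since system \eqref{DA_NSE} differs from the standard NSE only by the lower-order feedback term $-\mu(I_h(U_2)-I_h(u_2))$ on the second component. I would project \eqref{DA_NSE} onto the finite-dimensional space $P_n H$ spanned by the first $n$ eigenfunctions of the Stokes operator $A$, obtain a locally Lipschitz ODE system, and invoke Picard to get a local-in-time Galerkin solution $U^n$. The core of the proof is then to derive $n$-independent a priori bounds sufficient to pass to the limit in $C([0,T];V)\cap L^2([0,T];\mathcal{D}(A))$, together with $dU^n/dt$ bounded in $L^2([0,T];H)$ in order to apply Aubin--Lions.

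The crucial step, and the only place where the smallness hypothesis $\mu c_0^2 h^2\leq \nu$ enters, is the $H$-level energy estimate. Testing against $U=(U_1,U_2)$, using \eqref{div_uu} to cancel pressure and the nonlinearity $(U\cdot\nabla U,U)$, one gets
\begin{align*}
\tfrac12\tfrac{d}{dt}\|U\|_H^2+\nu\|\nabla U\|_{\Lph{2}}^2=(f,U)-\mu(I_h(U_2)-I_h(u_2),U_2).
\end{align*}
Rewriting the nudging term as $-\mu\|U_2\|^2+\mu(U_2-I_h(U_2),U_2)+\mu(I_h(u_2),U_2)$, I would apply Cauchy--Schwarz and Young combined with the approximation property \eqref{app_F} to obtain $|\mu(U_2-I_h(U_2),U_2)|\leq \tfrac{\mu c_0^2 h^2}{2}\|\nabla U_2\|_{\Lph{2}}^2+\tfrac{\mu}{2}\|U_2\|^2$; the hypothesis $\mu c_0^2 h^2\leq \nu$ then lets the viscous term absorb the gradient piece. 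The residual forcing $\mu(I_h(u_2),U_2)$ is bounded using $\|I_h(u_2)\|_{\Lph{2}}\leq \|u_2\|_{\Lph{2}}+c_0 h\|\nabla u_2\|_{\Lph{2}}$, which is controlled by Proposition \ref{unif_bounds_2D_NSE}. This yields the uniform $L^\infty_tH\cap L^2_tV$ bound. For the $V$-level bound, test against $AU$ (or $-\Delta U$ in the periodic case) and control $|(U\cdot\nabla U,AU)|$ by Ladyzhenskaya \eqref{L4_to_H1}: $\leq c\|U\|^{1/2}\|\nabla U\|\|AU\|^{3/2}$, then Young absorbs $\|AU\|^2$ into the viscous term, leaving an exponent-based inequality in $\|\nabla U\|^2$ to which scalar Gronwall applies, now that $\int_0^T\|\nabla U\|_{\Lph{2}}^2\,ds$ is already known to be finite. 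The feedback term contributes only lower-order perturbations already handled as above.

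Uniqueness and continuous dependence follow from an energy estimate for the difference $W=U-\tilde U$ of two solutions sharing the observation $I_h(u_2)$: the observation cancels and one gets $\tfrac12\tfrac{d}{dt}\|W\|^2+\nu\|\nabla W\|^2+\mu\|W_2\|^2=\mu(W_2-I_h(W_2),W_2)-(W\cdot\nabla U,W)$, where again $\mu c_0^2h^2\leq\nu$ absorbs the interpolation perturbation, and Ladyzhenskaya on $(W\cdot\nabla U,W)\leq c\|W\|\|\nabla W\|\|\nabla U\|$ produces a term $\tfrac{\nu}{2}\|\nabla W\|^2+c\|W\|^2\|\nabla U\|^2/\nu$; Gronwall applied with the already-established integrability of $\|\nabla U\|^2$ then closes the argument. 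The main obstacle I anticipate is not conceptual but the bookkeeping around the \emph{asymmetric} feedback: because the nudging acts only on $U_2$, one cannot appeal to symmetric friction-type cancellations, and must repeatedly rely on the divergence-free coupling \eqref{div_uu} together with the smallness condition to control the full gradient from the partial damping. Once this is in place, the argument mirrors the standard 2D NSE well-posedness theory.
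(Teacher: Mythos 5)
Your proposal is correct and is essentially the argument the paper has in mind: the paper gives no detailed proof of Theorem \ref{exist_uniq_NSE_1}, deferring instead to the standard Galerkin/energy-estimate machinery of the 2D NSE and to \cite{Azouani_Olson_Titi}, which is exactly what you carry out, with the nudging term handled via \eqref{app_F} and the condition $\mu c_0^2h^2\le\nu$ so that the viscosity absorbs the interpolation error. The only minor overstatement is calling the asymmetry of the feedback an obstacle here: for well-posedness (as opposed to convergence) the term $-\mu(I_h(U_2)-I_h(u_2))$ is merely a controlled perturbation and no friction-type negativity is actually needed.
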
 
We will now state and  prove that under certain conditions on $\mu$ and $h$, the solution $(\uu_1,\uu_2)$ of the data assimilation system \eqref{DA_NSE} converges to the solution $(u_1,u_2)$ of the two-dimensional Navier-Stokes equations \eqref{2D_NSE_com}, subject to periodic or Dirichlet boundary conditions, respectively, as $t\rightarrow \infty$, when the observable operators satisfy \eqref{app}. 

\begin{theorem}\label{th_conv_NS_1}
Suppose that $I_h$ satisfy the approximation property \eqref{app} and $u(t,x,y)$ $=$ $(u_1(t,x,y),u_2(t,x,y))$ is a strong solution in the global attractor of \eqref{2D_NSE_com} subject to Dirichlet boundary conditions. Let $\uu(t,x,y)$ $=$ $(\uu_1(t,x,y),\uu_2(t,x,y))$ be a strong solution of \eqref{DA_NSE}, subject to Dirichlet boundary conditions. If $\mu>0$ is chosen large enough such that
\begin{align}\label{mu_2D_NSE_1}
\mu \geq 2 c\nu\lambda_1( 1+ \log(G) + G^4) G^2, 
\end{align}
and $h>0$ is chosen small enough such that $\mu c_0^2h^2\leq \nu $,  then $\norm{u(t)-\uu(t)}_{\Lph{2}}^2~\rightarrow~0$ at an 
exponential rate, as $t \rightarrow \infty$.
\end{theorem}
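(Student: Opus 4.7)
The plan is to run a direct energy estimate on the error $w = u - U$ (with $q = p - P$), the essential twist being that the nudging provides control only on the second component. Subtracting \eqref{DA_NSE} from \eqref{2D_NSE_com} and using the identity $(u\cdot\nabla)u - (U\cdot\nabla)U = (u\cdot\nabla)w + (w\cdot\nabla)u - (w\cdot\nabla)w$, take the $L^{2}$ inner product with $w$: the pressure drops out by incompressibility and the identities $((u\cdot\nabla)w,w) = ((w\cdot\nabla)w,w) = 0$ reduce the energy equation to
\[
\tfrac{1}{2}\tfrac{d}{dt}\|w\|_{\Lph{2}}^{2} + \nu\|\nabla w\|_{\Lph{2}}^{2} + ((w\cdot\nabla)u,w) = -\mu\,(I_h(w_2), w_2).
\]
By \eqref{app_F}, a Cauchy-Schwarz-Young inequality, and the hypothesis $\mu c_0^2 h^2 \le \nu$, the nudging term is bounded as $-\mu(I_h(w_2), w_2) \le -\tfrac{\mu}{2}\|w_2\|^2 + \tfrac{\nu}{2}\|\nabla w\|^2$.

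The substantive step is controlling the nonlinear term
\[
((w\cdot\nabla)u, w) = T_1 + T_2 + T_3 + T_4,
\]
where $T_1 = \int_\Om w_1^2\, \partial_x u_1\,dxdy$, $T_2 = \int_\Om w_1 w_2\, \partial_y u_1\,dxdy$, $T_3 = \int_\Om w_1 w_2\, \partial_x u_2\,dxdy$, $T_4 = \int_\Om w_2^2\, \partial_y u_2\,dxdy$. For $T_2, T_3, T_4$, apply the logarithmic inequality \eqref{titi_1} with the triple $(u,v,w)$ chosen so that $\|w_2\|$ occupies the ``$w$-slot'' (the factor under the logarithm) and $\|\nabla u\|$, $\|\nabla w\|$ sit in the two gradient slots (e.g.\ for $T_2$ take $(w_1, u_1, w_2)$). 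The obstruction is $T_1$, which is quadratic in $w_1$: the standard Ladyzhenskaya bound $|T_1|\le c_L\|w_1\|\|\nabla w_1\|\|\nabla u_1\|$ is unusable because the nudging does not control $\|w_1\|$. The remedy is the divergence-free identity $\partial_x w_1 = -\partial_y w_2$ followed by an integration by parts:
\[
T_1 = -2\int_\Om u_1\, w_1\, \partial_x w_1\, dxdy = 2\int_\Om u_1\, w_1\, \partial_y w_2\, dxdy = -2\int_\Om (\partial_y u_1)\, w_1\, w_2\, dxdy - 2\int_\Om u_1\,(\partial_y w_1)\,w_2\, dxdy,
\]
and each of the two resulting integrals is again bounded by \eqref{titi_1} with $w_2$ in the $w$-slot. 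Aggregating,
\[
|((w\cdot\nabla)u, w)| \le C\,c_T\,\|\nabla u\|\,\|\nabla w\|\,\|w_2\|\,\Big(1+\log\tfrac{\|\nabla w_2\|}{\lambda_1^{1/2}\|w_2\|}\Big)^{1/2}.
\]

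Young's inequality splits this as $\tfrac{\nu}{4}\|\nabla w\|^2 + \tfrac{c}{\nu}\|\nabla u(t)\|^2\,L(t)\,\|w_2\|^2$ with $L(t) = 1 + \log(\|\nabla w_2(t)\|/\lambda_1^{1/2}\|w_2(t)\|)$; combined with the viscous and nudging contributions,
\[
\tfrac{d}{dt}\|w\|^2 + \tfrac{\nu}{2}\|\nabla w\|^2 + \Big(\mu - \tfrac{c}{\nu}\|\nabla u(t)\|^2\,L(t)\Big)\|w_2\|^2 \le 0.
\]
Using Poincar\'e $\|w\|^2 \le \lambda_1^{-1}\|\nabla w\|^2$ this reduces to $\tfrac{dY}{dt} + \beta(t)Y \le 0$ for $Y(t) = \|w(t)\|_{\Lph{2}}^2$. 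Apply Lemma~\ref{log_prop} with a parameter $\gamma\sim \|\nabla u\|^2/(\nu^2\lambda_1)$ to dispose of the logarithmic factor $L(t)$ at the price of a $1+\log\gamma$ constant, which is controlled by the pointwise attractor bound \eqref{dirichlet_H1}, $\|\nabla u\|^2 \le \tilde{c}\nu^2\lambda_1 G^2 e^{G^4}$, yielding $\log\gamma = O(G^4 + \log G)$. Finally, verify the hypotheses of the uniform Gronwall Lemma~\ref{gen_gron_2} using the time-integrated attractor bound $\int_t^{t+\tau}\|\nabla u(s)\|^2\,ds \le 2(1+\tau\nu\lambda_1)\nu G^2$ from \eqref{dirichlet_L2}: under $\mu \ge 2c\nu\lambda_1(1+\log G + G^4)G^2$, the time average $\int_t^{t+\tau}\beta(s)\,ds$ is bounded below by a positive constant, and hence $\|w(t)\|_{\Lph{2}}^2 \to 0$ at an exponential rate as $t\to\infty$. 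The sole conceptual difficulty is the reformulation of $T_1$ so that $\|w_2\|$ (which the nudging controls) replaces $\|w_1\|$; everything downstream is routine bookkeeping with Young's inequality, the log estimate, and the uniform Gronwall lemma.
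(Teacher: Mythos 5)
Your proposal is correct and follows essentially the same route as the paper: the same energy estimate on the error, the same integration-by-parts/divergence-free manipulation to convert the problematic $\int w_1^2\,\partial_x u_1$ term into integrals carrying a $w_2$ factor, the same use of the logarithmic inequality \eqref{titi_1}, Lemma \ref{log_prop}, the attractor bounds, and the uniform Gronwall lemma. The only (immaterial) deviation is that the paper bounds the term quadratic in $w_2$ by plain Ladyzhenskaya rather than the logarithmic estimate.
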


\begin{proof}
Define $\dU= u-\uu$ and $\dP = p-\p$. Then $\dU_1$ and $\dU_2$ satisfy the equations
\begin{subequations}\label{dU_NSE}
\begin{align}
&\pd{\dU_1}{t} - \nu \Dd \dU_1+ \uu_1\pp_x\dU_1 + \uu_2\pp_y\dU_1 + \dU_1 \pp_xu_1 +\dU_2\pp_yu_1 + \pp_x\dP= 0, \label{dU_1}\\
&\pd{\dU_2}{t} - \nu \Dd \dU_2+ \uu_1\pp_x\dU_2 + \uu_2\pp_y\dU_2 + \dU_1 \pp_xu_2 +\dU_2\pp_yu_2 + \pp_y\dP= -\mu I_h(\dU_2), \label{dU_2}\\
&\pp_x\dU_1 + \pp_y\dU_2  =0. \label{div_dU}
\end{align}
\end{subequations}
Since $\dU_1$, $\dU_2$, $\od{\dU_1}{t}$ and $\od{\dU_2}{t}$ are bounded in $L^2([0,T];H)$, we can take the $\Lph{2}$ inner product of \eqref{dU_1} and \eqref{dU_2} with $\dU_1$ and $\dU_2$, respectively. We obtain, using the divergence free condition \eqref{div_uu}, integration by parts and using the relevant boundary conditions, that
\begin{align*}
\frac 12 \od{}{t} \norm{\dU_1}_{\Lph{2}}^2 + \nu \norm{\nabla\dU_1}_{\Lph{2}}^2 &\leq \abs{J_{1a}} + \abs{J_{1b}} - (\pp_x\dP,\dU_1), \\ 
\frac 12 \od{}{t} \norm{\dU_2}_{\Lph{2}}^2 + \nu \norm{\nabla \dU_2}_{\Lph{2}}^2 &\leq \abs{J_{2a}} + \abs{J_{2b}} - (\pp_y\dP,\dU_2) - \mu (I_h(\dU_2),\dU_2),  
\end{align*}
where
\begin{align*}
J_{1a} := (\dU_1 \pp_xu_1 , \dU_1), \qquad J_{1b}: = (\dU_2\pp_yu_1,\dU_1), \\
J_{2a} : = (\dU_1 \pp_xu_2, \dU_2), \qquad J_{2b} : = (\dU_2\pp_yu_2,\dU_2). 
\end{align*}
To estimate the nonlinear terms we proceed as follows: using integration by parts twice, we have   
\begin{align*}
J_{1a} = \left( \pp_xu_1,(\dU_1)^2\right) & = -2\left(u_1\dU_,\pp_x\dU_1\right)\notag\\ 
& = 2 \left(u_1\dU_1,\pp_y\dU_2\right) \notag \\ 
& = -2 \left(\dU_1\pp_yu_1, \dU_2\right) - 2\left(u_1 \pp_y\dU_1,\dU_2\right) \notag \\
&=: -2 (J_{1a1}) - 2 (J_{1a2}),
\end{align*}
where we used above the divergence free condition \eqref{div_dU} and the relevant boundary conditions. 
By the logarithmic estimate \eqref{titi_1} and Young's inequality, we have 
\begin{align}\label{a}
|J_{1a1}| &: = |\left(\dU_1\pp_yu_1, \dU_2\right)|\notag\\
&\leq c_T\norm{\nabla\dU_1}_{\Lph{2}}\norm{\nabla u_1}_{\Lph{2}}\norm{\dU_2}_{\Lph{2}}\left(1 + \log \left(\frac{\norm{\nabla \dU_2}_{\Lph{2}}}{\lambda_1^{1/2}\norm{\dU_2}_{\Lph{2}}}\right)\right)^{1/2}\notag\\
& \leq \frac{\nu}{64} \norm{\nabla \dU_1}^2_{\Lph{2}} + \frac{ c}{\nu}\norm{\nabla u_1}_{\Lph{2}}^2\left(1 + \log \left(\frac{\norm{\nabla \dU_2}_{\Lph{2}}}{\lambda_1^{1/2}\norm{\dU_2}_{\Lph{2}}}\right)\right)\norm{\dU_2}_{\Lph{2}}^2,
\end{align}
and 
\begin{align}\label{b}
|J_{1a2}|& : = |\left(u_1 \pp_y\dU_1,\dU_2\right)| \notag \\
& \leq \frac{\nu}{64} \norm{\nabla \dU_1}^2_{\Lph{2}} + \frac{ c}{\nu}\norm{\nabla u_1}_{\Lph{2}}^2\left(1 + \log \left(\frac{\norm{\nabla \dU_2}_{\Lph{2}}}{\lambda_1^{1/2}\norm{\dU_2}_{\Lph{2}}}\right)\right)\norm{\dU_2}_{\Lph{2}}^2.
\end{align}
Thus, 
\begin{align}\label{1}
\abs{J_{1a}} &\leq \frac{\nu}{32} \norm{\nabla \dU_1}_{\Lph{2}}^2 + \frac{ c}{\nu}\norm{\nabla u_1}_{\Lph{2}}^2\left(1 + \log \left(\frac{\norm{\nabla \dU_2}_{\Lph{2}}}{\lambda_1^{1/2}\norm{\dU_2}_{\Lph{2}}}\right)\right)\norm{\dU_2}_{\Lph{2}}^2.
\end{align}

A similar argument as in \eqref{a} and \eqref{b} yields 
\begin{align}\label{2}
\abs{J_{1b}} & : = |(\dU_2\pp_yu_1,\dU_1)| \notag \\
&\leq \frac{\nu}{32} \norm{\nabla \dU_1}_{\Lph{2}}^2 + \frac{ c}{\nu}\norm{\nabla u_1}_{\Lph{2}}^2\left(1 + \log \left(\frac{\norm{\nabla \dU_2}_{\Lph{2}}}{\lambda_1^{1/2}\norm{\dU_2}_{\Lph{2}}}\right)\right)\norm{\dU_2}_{\Lph{2}}^2. 
\end{align}
and 
\begin{align}\label{3}
\abs{J_{2a}} &: = |(\dU_1 \pp_xu_2, \dU_2)|\notag \\
&\leq \frac{\nu}{32} \norm{\nabla \dU_1}_{\Lph{2}}^2 + \frac{ c}{\nu}\norm{\nabla u_2}_{\Lph{2}}^2\left(1 + \log \left(\frac{\norm{\nabla \dU_2}_{\Lph{2}}}{\lambda_1^{1/2}\norm{\dU_2}_{\Lph{2}}}\right)\right)\norm{\dU_2}_{\Lph{2}}^2.  
\end{align}
By H\"older's inequality,  Ladyzhenskaya's inequality \eqref{L4_to_H1}, we have 
\begin{align}\label{4}
\abs{J_{2b}}= \abs{(\dU_2\pp_yu_2,\dU_2)} & \leq \norm{\dU_2}_{\Lph{4}}^2\norm{\pp_yu_2}_{\Lph{2}}\notag \\
& \leq c_L\norm{\dU_2}_{\Lph{2}}\norm{\nabla\dU_2}_{\Lph{2}}\norm{\pp_yu_2}_{\Lph{2}}\notag \\
& \leq \frac{\nu}{32} \norm{\nabla\dU_2}_{\Lph{2}}^2+ \frac{c}{\nu}\norm{\pp_yu_2}_{\Lph{2}}^2\norm{\dU_2}_{\Lph{2}}^2. 
\end{align}
Thanks to the assumption $\mu c_0^2h^2\leq \nu$ and Young's inequality, 
\begin{align}\label{5}
-\mu(I_h(\dU_2),\dU_2) & = -\mu(I_h(\dU_2)-\dU_2, \dU_2) - \mu \norm{\dU_2}_{\Lph{2}}^2\notag \\
& \leq \mu \norm{I_h(\dU_2)-\dU_2}_{\Lph{2}}\norm{\dU_2}_{\Lph{2}} - \mu \norm{\dU_2}_{\Lph{2}}^2\notag \\
&\leq \mu c_0h\norm{\dU_2}_{\Lph{2}}\norm{\nabla\dU_2}_{\Lph{2}} - \mu \norm{\dU_2}_{\Lph{2}}^2\notag \\
& \leq \frac{\mu c_0^2h^2}{2}\norm{\nabla\dU_2}_{\Lph{2}}^2 - \frac{\mu}{2}\norm{\dU_2}_{\Lph{2}}^2\notag \\
&\leq \frac{\nu}{2}\norm{\nabla\dU_2}_{\Lph{2}}^2 -\frac{\mu}{2}\norm{\dU_2}_{\Lph{2}}^2. 
\end{align}
Also, note that
\begin{align}\label{6}
(\pp_x\dP, \dU_1) + (\pp_y\dP, \dU_2) = 0, 
\end{align}
thanks to the divergence free condition \eqref{div_dU}, integration by parts and using the relevant boundary conditions.  
It follows from the estimates \eqref{1}--\eqref{6} that 
\begin{align*}
& \od{}{t} \norm{\dU}_{\Lph{2}}^2 + \frac\nu2\norm{\nabla \dU}_{\Lph{2}}^2 \leq \notag \\
& \qquad \qquad \left(\frac{ c}{\nu}\norm{\nabla u}_{\Lph{2}}^2\left(1 + \log \left(\frac{\norm{\nabla \dU_2}_{\Lph{2}}}{\lambda_1^{1/2} \norm{\dU_2}_{\Lph{2}}}\right)\right)-\mu\right)\norm{\dU_2}_{\Lph{2}}^2,  
\end{align*}
or 
\begin{align}\label{conv_est_1}
& \od{}{t} \norm{\dU}_{\Lph{2}}^2 + \frac\nu4\norm{\nabla \dU}_{\Lph{2}}^2 + \frac\nu4\norm{\nabla \dU_2}_{\Lph{2}}^2 \leq \notag \\
& \qquad \qquad \left(\frac{ c}{\nu}\norm{\nabla u}_{\Lph{2}}^2\left(1 + \log \left(\frac{\norm{\nabla \dU_2}_{\Lph{2}}^2}{\lambda_1 \norm{\dU_2}_{\Lph{2}}^2}\right)\right)-\mu\right)\norm{\dU_2}_{\Lph{2}}^2. 
\end{align}

Next, we will use a similar argument as in \cite{Azouani_Olson_Titi} and \cite{FJKT_2014} to prove that $\norm{\dU(t)}_{\Lph{2}}~\rightarrow~0$, exponentially, as $t \rightarrow \infty$. 

Using the Poincar\'e inequality \eqref{poincare}, we may rewrite \eqref{conv_est_1} as
\begin{align}\label{conv_est_11}
& \od{}{t} \norm{\dU}_{\Lph{2}}^2 + \frac{\nu\lambda_1}{4}\norm{\dU}_{\Lph{2}}^2 +  \frac{\nu\lambda_1}{4}\frac{\norm{\nabla \dU_2}_{\Lph{2}}^2}{\lambda_1\norm{\dU_2}_{\Lph{2}}^2}\norm{\dU_2}_{\Lph{2}}^2\leq \notag \\
& \qquad \qquad \left(\frac{ c}{\nu}\norm{\nabla u}_{\Lph{2}}^2\left(1 + \log \left(\frac{\norm{\nabla \dU_2}_{\Lph{2}}^2}{\lambda_1\norm{\dU_2}_{\Lph{2}}^2}\right)\right)-\mu\right)\norm{\dU_2}_{\Lph{2}}^2,  
\end{align}
or 
\begin{align}\label{conv_est_2}
& \od{}{t} \norm{\dU}_{\Lph{2}}^2 + \frac{\nu\lambda_1}{4}\norm{\dU}_{\Lph{2}}^2 +  \frac{\nu\lambda_1}{4}\left(\phi(r(t)) +\frac{4\mu}{\nu\lambda_1}\right)\norm{\dU_2}_{\Lph{2}}^2 \leq 0, 
\end{align}
where we denoted by 
$$\phi(r(t)) := r(t) - \gamma(t)(1+\log(r(t)),$$
$$ r(t): = \frac{\norm{\nabla \dU_2}_{\Lph{2}}^2}{\lambda_1 \norm{\dU_2}_{\Lph{2}}^2}, \quad \gamma(t) : = \frac{c}{\nu^2\lambda_1}\norm{\nabla u}_{\Lph{2}}^2. $$
Now, Lemma \ref{log_prop} implies that
$$\phi(r(t)) \geq - \gamma \log(\gamma) = - \frac{c}{\nu^2\lambda_1}\norm{\nabla u}_{\Lph{2}}^2\log\left(\frac{c}{\nu^2\lambda_1}\norm{\nabla u}_{\Lph{2}}^2\right).$$
Setting 
$$\beta(t) := \mu -   \frac{c}{\nu}\norm{\nabla u}_{\Lph{2}}^2\log\left(\frac{c}{\nu^2\lambda_1}\norm{\nabla u}_{\Lph{2}}^2\right), $$
we have 
\begin{align}
& \od{}{t} \norm{\dU}_{\Lph{2}}^2 + \frac{\nu\lambda_1}{4}\left(\norm{\dU_1}_{\Lph{2}}^2+\norm{\dU_2}_{\Lph{2}}^2\right)  + \beta(t) \norm{\dU_2}_{\Lph{2}}^2 \leq 0. 
\end{align}

We may conclude that 
\begin{align*}
\od{}{t} \norm{\dU}_{\Lph{2}}^2 +\min\{\frac{\nu\lambda_1}{4}, \,\beta(t)\}\norm{\dU}^2_{\Lph{2}}\leq 0. 
\end{align*}
Taking $\tau = (\nu\lambda_1)^{-1}$ in Proposition \ref{unif_bounds_2D_NSE}, using \eqref{dirichlet_L2} and \eqref{dirichlet_int_H2}, and due to Remark \ref{t0_remark}, we conclude that
\begin{align}\label{major_est_1}
 \frac{c}{\nu}\int_t^{t+\tau}&\norm{\nabla u(s)}_{\Lph{2}}^2\log\left(\frac{c}{\nu^2\lambda_1}\norm{\nabla u(s)}_{\Lph{2}}^2\right)\,ds \notag\\
  &\leq  \frac{c}{\nu}(1 + \log(G) + G^4) \int_t^{t+\tau} \norm{\nabla u(s)}_{\Lph{2}}^2\,ds \notag \\
 & \leq c( 1+ \log(G) + G^4) G^2, 
\end{align} 
for all $t\geq 0$. Therefore, the assumption \eqref{mu_2D_NSE_1} implies that 
\begin{align}
\liminf_{t\rightarrow\infty} \int_t^{t+\tau} \beta(s)\,ds \geq \frac{\mu}{2\nu\lambda_1}>0, 
\end{align}
and 
\begin{align}
\limsup_{t\rightarrow\infty} \int_t^{t+\tau} \beta(s)\,ds \leq\frac{3\mu}{2\nu\lambda_1} < \infty. 
\end{align}
Define $\tilde{\beta}(t) := \min\{\frac{\nu\lambda_1}{4}, \,\beta(t)\}$, then $\tilde{\beta}(t)$ satisfies the condition \eqref{uniform_conditions}. By the uniform Gronwall's  lemma \eqref{gen_gron_2}, we obtain 
$$\norm{\dU}_{\Lph{2}}^2= \norm{u(t)-\uu(t)}_{\Lph{2}}^2 \rightarrow 0,$$ at an exponential rate as $t\rightarrow\infty$.
\end{proof}

\begin{remark}\label{periodic_1}
Theorem \eqref{th_conv_NS_1} holds in the case of periodic boundary conditions. Using \eqref{per_est} in Proposition \ref{unif_bounds_2D_NSE}, the estimate \eqref{major_est_1} can be improved to 
\begin{align}
 \frac{c}{\nu}\int_t^{t+\tau}\norm{\nabla u(s)}_{\Lph{2}}^2\log\left(\frac{c}{\nu^2\lambda_1}\norm{\nabla u(s)}_{\Lph{2}}^2\right)\,ds 
 & \leq c( 1+ \log(G)) G^2, 
\end{align}
thus the lower bound on $\mu$ \eqref{mu_2D_NSE_1} can be improved to 
\begin{align}
\mu \geq 2 c \nu\lambda_1( 1+ \log(G)) G^2.
\end{align}
\end{remark}

\bigskip

%----------------------------------------------------------------------
\section{Convergence analysis with observable data of type II}\label{conv_typeII}
%-----------------------------------------------------------------------

Next, we will prove that under certain conditions on $\mu$, the approximate solution $(\uu_1,\uu_2)$ of the data assimilation system \eqref{DA_NSE} exists globally in time and converges to the solution $(u_1,u_2)$ of the 2D Navier-Stokes equations \eqref{2D_NSE_com}, subject to periodic or Dirichlet boundary conditions, respectively, as $t\rightarrow \infty$, when the observable operators satisfy \eqref{app2}. 

\subsection{Dirichlet Boundary Conditions}

The existence and uniqueness of strong solutions of the data assimilation system \eqref{DA_NSE} with observables that satisfy \eqref{app2}, subject to Dirichlet boundary conditions, does not follow immediately as in the case of the 2D Navier-Stokes equations. Extra conditions on the nudging constant $\mu>0$ are required. Next, we will prove the global existence of the strong solution $\uu(t)$ of \eqref{DA_NSE} and simultaneously show that it converges, in time,  to the reference solution $u(t)$ of the 2D Navier-Stokes equations \eqref{2D_NSE_com}. 

\begin{theorem}\label{th_conv_NS_2}
Let $I_h$ satisfy the approximation property \eqref{app2} and $u(t,x,y)$ $=$ $(u_1(t,x,y),u_2(t,x,y))$ be a strong solution in the global attractor of \eqref{2D_NSE_com} subject to Dirichlet boundary conditions.  If $\uu^0 \in V$ with 
\begin{align}\label{initial_condition_nse} 
\norm{\nabla  \uu^0}_{\Lph{2}}^2\leq {\tilde c}\nu^2\lambda_1G^2e^{G^4}, 
\end{align}
where ${\tilde c}$ is the constant in \eqref{dirichlet_H1}, and $\mu>0$ is large enough such that
\begin{align}\label{mu_2D_NSE_2}
\mu \geq 2c \nu\lambda_1K\log(K),
\end{align}
where $K$ is defined in \eqref{J}, and $h>0$ is chosen small enough such that $\mu c_0^2h^2\leq \nu $. Then, there exists a unique strong solution $(\uu_1(t,x,y), \uu_2(t,x,y))$ of \eqref{DA_NSE} that satisfies 
\begin{align*}
\uu \in C([0,T];V) \cap L^2([0,T];\mathcal{D}(A)), \quad \text{and} \quad \od{\uu}{t} \in L^2([0,T];H),  
\end{align*}
such that $$\norm{\nabla  \uu(t)}_{\Lph{2}}^ 2\leq 7{\tilde c}\nu^2\lambda_1G^2e^{G^4},$$ for all $t>0$. Moreover, $\norm{\nabla u(t)-\nabla \uu(t)}_{\Lph{2}}^2$ $\rightarrow 0$, at an 
exponential rate, as $t \rightarrow \infty$.
\end{theorem}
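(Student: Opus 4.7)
The plan is to run an $H^1$-level analogue of the argument used for Theorem \ref{th_conv_NS_1}, simultaneously establishing the global bound on $\norm{\nabla \uu(t)}_{\Lph{2}}$ and the exponential decay of $\norm{\nabla \dU(t)}_{\Lph{2}}$, where $\dU := u - \uu$. Set $\dP := p - \p$; the components $\dU_1,\dU_2$ satisfy system \eqref{dU_NSE}. First I would proceed by Galerkin approximation, which yields local-in-time strong solutions, and then argue by a continuation/bootstrap that the bound $\norm{\nabla \uu(t)}_{\Lph{2}}^2 \le 7\tilde c\nu^2\lambda_1 G^2 e^{G^4}$ cannot be broken on any finite interval. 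Throughout, test the first component of \eqref{dU_NSE} against $A\dU_1$ and the second against $A\dU_2$ and add; the pressure terms cancel thanks to \eqref{div_dU} and the Dirichlet boundary conditions, producing
\begin{equation*}
\tfrac12\od{}{t}\norm{\nabla\dU}_{\Lph{2}}^2 + \nu\norm{\Delta\dU}_{\Lph{2}}^2 = -\sum_{k}N_k + \mu\pair{I_h(\dU_2)}{\Delta\dU_2},
\end{equation*}
where the $N_k$ collect the eight trilinear terms from expanding $(\uu\cdot\nabla)\dU$, $(\dU\cdot\nabla)u$ and testing against $\Delta\dU$.

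For the nudging term, decompose $I_h(\dU_2) = (I_h(\dU_2)-\dU_2) + \dU_2$ and apply the type-II bound \eqref{app2_F} together with Young's inequality and the standing assumption $\mu c_0^2 h^2 \le \nu$ to obtain
\begin{equation*}
\mu\pair{I_h(\dU_2)}{\Delta\dU_2} \le \tfrac{\nu}{2}\norm{\Delta\dU_2}_{\Lph{2}}^2 - \tfrac{\mu}{2}\norm{\nabla\dU_2}_{\Lph{2}}^2.
\end{equation*}
For the nonlinear terms, since $\dU$ vanishes on $\pp\Om$, each $N_k$ has the form $\int \dU_i\,\pp_j(\cdot)\,\pp_{\ell\ell}\dU_m$ with the "$(\cdot)$" being a component of $u$ or $\uu$; apply the logarithmic inequalities \eqref{titi_2} and \eqref{titi_3}, distributing the Laplacian onto $u$, $\uu$, or $\dU$ whichever is most convenient, followed by Young's inequality. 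Each contribution is absorbed as
\begin{equation*}
\abs{N_k} \le \tfrac{\nu}{16}\norm{\Delta\dU}_{\Lph{2}}^2 + \tfrac{c}{\nu}\bigl(\norm{\nabla u}_{\Lph{2}}^2 + \norm{\nabla\uu}_{\Lph{2}}^2\bigr)\,\Bigl(1+\log r_k\Bigr)\,\norm{\nabla\dU}_{\Lph{2}}^2,
\end{equation*}
with $r_k$ a ratio of $\Hph{2}$ to $\Hph{1}$ norms of $u$, $\uu$, or $\dU$. Summing and rearranging yields
\begin{equation*}
\od{}{t}\norm{\nabla\dU}_{\Lph{2}}^2 + \tfrac{\nu}{2}\norm{\Delta\dU}_{\Lph{2}}^2 + \mu\norm{\nabla\dU_2}_{\Lph{2}}^2 \le \Phi(t)\,\norm{\nabla\dU}_{\Lph{2}}^2,
\end{equation*}
where $\Phi(t)$ is an explicit combination of the attractor bounds \eqref{dirichlet_H1}--\eqref{dirichlet_H2} for $u$ and of the bootstrap bound for $\uu$. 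An application of Lemma \ref{log_prop}, exactly as in the proof of Theorem \ref{th_conv_NS_1}, converts the $(1+\log r_k)$ factors into a constant that matches the quantity $K\log K$ appearing in \eqref{mu_2D_NSE_2}.

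To close the bootstrap, use the hypothesis \eqref{initial_condition_nse} to observe that $\norm{\nabla\dU(0)}_{\Lph{2}}^2 \le 4\tilde c\nu^2\lambda_1 G^2 e^{G^4}$; combined with the differential inequality and the condition \eqref{mu_2D_NSE_2} on $\mu$, the right-hand side $\Phi(t)\norm{\nabla\dU}_{\Lph{2}}^2$ is beaten by the $\mu\norm{\nabla\dU_2}_{\Lph{2}}^2$ and dissipative terms, so $\norm{\nabla\dU(t)}_{\Lph{2}}^2$ never exceeds its initial value. Since $\norm{\nabla\uu}_{\Lph{2}} \le \norm{\nabla u}_{\Lph{2}} + \norm{\nabla\dU}_{\Lph{2}}$ and the right-hand side is bounded by $\tilde c^{1/2}\nu\lambda_1^{1/2} G e^{G^4/2}(1+2)$, the claimed bound $\norm{\nabla\uu}_{\Lph{2}}^2 \le 7\tilde c\nu^2\lambda_1 G^2 e^{G^4}$ propagates to all $t > 0$. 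This in turn justifies the logarithmic ratios $r_k$ being a priori bounded in terms of $K$, closing the circular argument and giving global existence in $C([0,T];V)\cap L^2([0,T];\mathcal D(A))$; uniqueness and continuous dependence follow by a standard energy argument.

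Finally, having established the bound, integrate $\Phi(t)$ over windows of length $\tau=(\nu\lambda_1)^{-1}$ using Proposition \ref{unif_bounds_2D_NSE}, invoke Remark \ref{t0_remark}, and deduce that the coefficient $\tilde\beta(t) := \min\{\tfrac{\nu\lambda_1}{4},\,\mu/2-\Phi(t)\}$ satisfies the hypotheses \eqref{uniform_conditions} of the uniform Gronwall inequality, Lemma \ref{gen_gron_2}; thus $\norm{\nabla\dU(t)}_{\Lph{2}}^2 \to 0$ exponentially. The main obstacle will be the combinatorics of the nonlinear estimates: unlike the periodic case where the identity \eqref{per_orth_2} eliminates many terms, here every trilinear piece must be handled via the logarithmic inequalities, with a judicious choice of which factor absorbs the $\Delta$ so that the bootstrap hypothesis, not any derivative of $u$ beyond what \eqref{dirichlet_H2} provides, controls the resulting $r_k$. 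Managing the simultaneous dependence of $\mu$ on $K$ and $K$ on the bootstrap bound is the delicate circular structure of the argument.
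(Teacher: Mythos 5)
There is a genuine gap at the heart of your nonlinear estimates. You bound each trilinear term by
$\frac{\nu}{16}\norm{\Delta\dU}_{\Lph{2}}^2 + \frac{c}{\nu}\bigl(\norm{\nabla u}_{\Lph{2}}^2+\norm{\nabla\uu}_{\Lph{2}}^2\bigr)(1+\log r_k)\norm{\nabla\dU}_{\Lph{2}}^2$, i.e.\ with the \emph{full} difference $\norm{\nabla\dU}_{\Lph{2}}^2$ as the quadratic factor, and then claim the resulting $\Phi(t)\norm{\nabla\dU}_{\Lph{2}}^2$ is ``beaten by the $\mu\norm{\nabla\dU_2}_{\Lph{2}}^2$ and dissipative terms.'' It is not: the nudging term acts only on the observed component $\dU_2$, so nothing controls the piece $\Phi(t)\norm{\nabla\dU_1}_{\Lph{2}}^2$. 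The viscous term gives only $\frac{\nu}{2}\norm{\Delta\dU_1}_{\Lph{2}}^2\geq\frac{\nu\lambda_1}{2}\norm{\nabla\dU_1}_{\Lph{2}}^2$, and $\Phi(t)\sim c\nu\lambda_1 G^2e^{G^4}(1+\log r)$ vastly exceeds $\frac{\nu\lambda_1}{2}$ for large Grashof number, so the differential inequality you write down does not close. The whole point of the paper's (lengthy) term-by-term computation is to use the two-dimensional divergence-free constraint $\pp_x\dU_1=-\pp_y\dU_2$, via repeated integration by parts, to rewrite \emph{every} dangerous trilinear contribution --- including the ones like $(\uu_2\pp_y\dU_1,\pp_{yy}\dU_1)$ that a priori involve only $\dU_1$ --- so that the non-absorbable factor is $\norm{\nabla\dU_2}_{\Lph{2}}^2$ and the logarithmic ratio is $\norm{\Delta\dU_2}_{\Lph{2}}/(\lambda_1^{1/2}\norm{\nabla\dU_2}_{\Lph{2}})$. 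Only then can the right-hand side be dominated by $\mu\norm{\nabla\dU_2}_{\Lph{2}}^2$ under hypothesis \eqref{mu_2D_NSE_2}. Your proposal names the combinatorics as an ``obstacle'' about where to place the Laplacian, but your displayed master inequality shows the essential structural requirement has been missed.

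A secondary but related problem: you propose to conclude decay via the uniform Gronwall lemma with window-averaged integrals of $\Phi$. The continuation argument requires the bound $\norm{\nabla\dU(t)}_{\Lph{2}}^2\leq\norm{\nabla\dU(0)}_{\Lph{2}}^2$ to hold \emph{pointwise} on $[0,\tilde T)$ in order to rule out $\norm{\nabla\uu(t)}_{\Lph{2}}^2$ ever reaching $7\tilde c\nu^2\lambda_1G^2e^{G^4}$; a time-averaged positivity of the damping coefficient permits transient growth that could break the bootstrap before the average takes effect. This is precisely why the paper uses the \emph{pointwise} attractor bound \eqref{dirichlet_H2} on $\norm{\Delta u(t)}_{\Lph{2}}^2$ (proved in Appendix A) to obtain $\gamma(t)\leq K$ and hence $\beta(t)>0$ for every $t$, and then applies the ordinary Gronwall inequality, rather than the averaged estimates and Lemma \ref{gen_gron_2} used in Theorem \ref{th_conv_NS_1}.
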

\begin{proof}

We will prove some formal {\it apriori} estimates that are essential in proving the global existence of solutions of system \eqref{DA_NSE}. These estimates can be justified rigorously by using the Galerkin method and the Aubin compactness theorem (see e.g. \cite{Constantin_Foias_1988}).
 
Define $\dU= u-\uu$ and $\dP = p-\p$. Then $\dU_1$ and $\dU_2$ satisfy the equations \eqref{dU_NSE}.  Since by assumption, $(u_1,u_2)$ is a solution which is contained in the global attractor of \eqref{2D_NSE_com}, in particular, it satisfies the global estimates in Proposition \ref{unif_bounds_2D_NSE}, then showing the  global existence, in time, of the solution $(\dU_1, \dU_2)$ is equivalent to showing the global existence, in time, of the solution $(\uu_1, \uu_2)$ of system \eqref{DA_NSE}. To be concise here, we will show the global existence of the solution $\dU(t)$ and show that $\norm{\dU(t)}_{\Lph{2}}^2$ decays exponentially, in time, which will prove the convergence of the approximate solution $\uu(t)$ to the exact solution $u(t)$, exponentially in time.

Since $\norm{\nabla  \uu^0}_{\Lph{2}}^2\leq {\tilde c}\nu^2\lambda_1G^2e^{G^4}$,  then by the continuity of $\norm{\nabla \uu(t)}_{\Lph{2}}^2$, there exists a short time interval $[0,{\tilde{T}})$ such that
\begin{align}\label{V_bound}
\norm{\nabla  \uu(t)}_{\Lph{2}}^2\leq7{\tilde c}\nu^2\lambda_1G^2e^{G^4},
\end{align}
for all $t\in [0,{\tilde{T}})$. Assume $[0, \tilde{T})$ is the maximal finite time interval such that \eqref{V_bound} holds. We will show, by contradiction, that $ \tilde{T} =\infty$. Assume that $\tilde{T}<\infty$, then it is clear that $$\limsup_{t\rightarrow\tilde{T}^{-}} \norm{\nabla \uu(t)}_{\Lph{2}}^2 = 7{\tilde c}\nu^2\lambda_1G^2e^{G^4},$$ otherwise, \eqref{V_bound} holds beyond $\tilde{T}$.
Taking the $\Lph{2}$ inner product of \eqref{dU_1} and \eqref{dU_2} with $-\Delta \dU_1$ and $-\Delta \dU_2$, respectively, we obtain, on the time interval $[0,\tilde{T})$, that 
\begin{align*}
\frac 12 \od{}{t} \norm{\nabla \dU_1}_{\Lph{2}}^2 + \nu \norm{\Delta\dU_1}_{\Lph{2}}^2 &\leq \abs{J_{1a}} + \abs{J_{1b}} + \abs{J_{1c}} + \abs{J_{1b}} + (\pp_x\dP,\Delta\dU_1), \\ 
\frac 12 \od{}{t} \norm{\nabla\dU_2}_{\Lph{2}}^2 + \nu \norm{\Delta \dU_2}_{\Lph{2}}^2 &\leq \abs{J_{2a}} + \abs{J_{2b}} + \abs{J_{2c}} + \abs{J_{2d}} + (\pp_y\dP,\Delta\dU_2) \notag \\
 & \quad - \mu (I_h(\dU_2),-\Delta\dU_2),  
\end{align*}
where
\begin{align*}
J_{1a} := (\uu_1\pp_x\dU_1 ,-\Delta \dU_1), \qquad J_{1b}: = (\uu_2\pp_y\dU_1, -\Delta\dU_1), \\
J_{1c} := (\dU_1\pp_xu_1 ,-\Delta \dU_1), \qquad J_{1d}: = (\dU_2\pp_yu_1, -\Delta\dU_1), \\
J_{2a} : = (\uu_1 \pp_x\dU_2, -\Delta \dU_2), \qquad J_{2b} : = (\uu_2\pp_y\dU_2,-\Delta\dU_2), \\ 
J_{2c} : = (\dU_1 \pp_xu_2, -\Delta \dU_2), \qquad J_{2d} : = (\dU_2\pp_yu_2,-\Delta\dU_2). 
\end{align*}

Thanks to the assumption $\mu c_0^2h^2\leq \nu$, Young's inequality and inequality \eqref{app2}, we have 
\begin{align}\label{1_2}
-\mu(I_h(\dU_2),-\Delta\dU_2) & = -\mu(I_h(\dU_2)-\dU_2, -\Delta\dU_2) - \mu \norm{\nabla\dU_2}_{\Lph{2}}^2\notag \\
& \leq \mu \norm{I_h(\dU_2)-\dU_2}_{\Lph{2}}\norm{\Delta\dU_2}_{\Lph{2}} - \mu \norm{\dU_2}_{\Lph{2}}^2\notag \\
&\leq \frac{\mu^2}{\nu} \norm{I_h(\dU_2)-\dU_2}_{\Lph{2}}^2 + \frac {\nu}{4}\norm{\Delta \dU_2}_{\Lph{2}}^2 - \mu \norm{\nabla\dU_2}_{\Lph{2}}^2\notag \\
& \leq \frac{\mu^2c_0^2h^2}{2\nu} \norm{\nabla \dU_2}_{\Lph{2}}^2 + \frac{\mu^2c_0^4h^4}{4\nu}\norm{\Delta \dU_2}_{\Lph{2}}^2 + \frac {\nu}{4}\norm{\Delta \dU_2}_{\Lph{2}}^2\notag \\ & \qquad - \mu \norm{\nabla\dU_2}_{\Lph{2}}^2 \notag \\ 
& \leq \frac{\nu}{2} \norm{\Delta \dU_2}_{\Lph{2}}^2 - \frac{\mu}{2} \norm{\nabla \dU_2}_{\Lph{2}}^2. 
\end{align}

To estimate the nonlinear terms we proceed as follows: using the divergence free condition \eqref{div_dU} and the logarithmic inequality \eqref{titi_2}: 
\begin{align}\label{2_2}
& J_{1a} = (\uu_1\pp_x\dU_1 ,-\Delta \dU_1) = (\uu_1\pp_y\dU_2 ,\Delta \dU_1)\notag \\
& \leq c_T \norm {\nabla \uu_1}_{\Lph{2}} \norm{\nabla \dU_2}_{\Lph{2}}\norm{\Delta \dU_1}_{\Lph{2}} \left( 1 + \log \left(\frac{\norm{\Delta \dU_2}_{\Lph{2}}}{\lambda_1^{1/2}\norm{\nabla \dU_2}_{\Lph{2}}}\right)\right)^{1/2}\notag \\ 
& \leq \frac{\nu}{100} \norm{\Delta \dU_1}_{\Lph{2}}^2 + \frac{c}{\nu}\norm{\nabla\uu_1}_{\Lph{2}}^2\norm{\nabla \dU_2}_{\Lph{2}}^2 \left( 1 + \log \left(\frac{\norm{\Delta \dU_2}_{\Lph{2}}}{\lambda_1^{1/2}\norm{\nabla \dU_2}_{\Lph{2}}}\right)\right). 
\end{align} 
By a similar argument, we can show 
\begin{align} \label{I2a}
J_{2a} & = (\uu_1 \pp_x\dU_2, -\Delta \dU_2)\notag \\
& \leq \frac{\nu}{100} \norm{\Delta \dU_2}_{\Lph{2}}^2 + \frac{c}{\nu}\norm{\nabla\uu_1}_{\Lph{2}}^2\norm{\nabla \dU_2}_{\Lph{2}}^2 \left( 1 + \log \left(\frac{\norm{\Delta \dU_2}_{\Lph{2}}}{\lambda_1^{1/2}\norm{\nabla \dU_2}_{\Lph{2}}}\right)\right), 
\end{align}
and 
\begin{align}\label{3_2}
J_{2b} & = (\uu_2\pp_y\dU_2,-\Delta\dU_2)\notag \\
& \leq \frac{\nu}{100} \norm{\Delta \dU_2}_{\Lph{2}}^2 + \frac{c}{\nu}\norm{\nabla\uu_2}_{\Lph{2}}^2\norm{\nabla \dU_2}_{\Lph{2}}^2 \left( 1 + \log \left(\frac{\norm{\Delta \dU_2}_{\Lph{2}}}{\lambda_1^{1/2}\norm{\nabla \dU_2}_{\Lph{2}}}\right)\right). 
\end{align}
By the logarithmic inequality \eqref{titi_3}, we have 
\begin{align}\label{4_2} 
J_{1d} &= (\dU_2\pp_yu_1, -\Delta\dU_1)\notag \\ 
& \leq c_T \norm{\nabla\dU_2}_{\Lph{2}}\norm{\nabla u_1}_{\Lph{2}}\norm{\Delta \dU_1}_{\Lph{2}} \left( 1 + \log \left(\frac{\norm{\Delta \dU_2}_{\Lph{2}}}{\lambda_1^{1/2}\norm{\nabla \dU_2}_{\Lph{2}}}\right)\right)^{1/2}\notag \\
& \leq \frac{\nu}{100}\norm{\Delta \dU_1}_{\Lph{2}}^2 + \frac{c}{\nu} \norm{\nabla u_1}_{\Lph{2}}^2 \norm{\nabla \dU_2}_{\Lph{2}}^2 \left( 1 + \log \left(\frac{\norm{\Delta \dU_2}_{\Lph{2}}}{\lambda_1^{1/2}\norm{\nabla \dU_2}_{\Lph{2}}}\right)\right). 
\end{align}
Similarly, we have 
\begin{align}\label{5_2}
J_{2d} &= (\dU_2\pp_yu_2,-\Delta\dU_2)\notag \\
& \leq \frac{\nu}{100}\norm{\Delta \dU_2}_{\Lph{2}}^2 + \frac{c}{\nu} \norm{\nabla u_2}_{\Lph{2}}^2 \norm{\nabla \dU_2}_{\Lph{2}}^2 \left( 1 + \log \left(\frac{\norm{\Delta \dU_2}_{\Lph{2}}}{\lambda_1^{1/2}\norm{\nabla \dU_2}_{\Lph{2}}}\right)\right). 
\end{align}

Integration by parts and the boundary conditions yield  
\begin{align*}
J_{1b} = (\uu_2\pp_y\dU_1, -\Delta\dU_1) & = (\uu_2\pp_y\dU_1, -\pp_{xx}\dU_1) + (\uu_2\pp_y\dU_1, -\pp_{yy}\dU_1) \notag \\
& = (\pp_x\uu_2\pp_y \dU_1, \pp_x\dU_1) + (\uu_2\pp_{yx}\dU_1, \pp_x\dU_1) - (\uu_2\pp_y\dU_1, \pp_{yy}\dU_1)\notag \\
& = : J_{1b1} + J_{1b2} - J_{1b3}. 
\end{align*}
Using  the divergence free condition \eqref{div_dU} and the logarithmic inequality \eqref{titi_1}, we get
\begin{align}\label{6_2}
J_{1b1} &= (\pp_x\uu_2\pp_y \dU_1, \pp_x\dU_1) = -(\pp_y \dU_1\pp_x\uu_2, \pp_y\dU_2) \notag \\
& \leq c_T \norm{\Delta \dU_1}_{\Lph{2}}\norm{\nabla \uu_2}_{\Lph{2}}\norm{\nabla \dU_2}_{\Lph{2}}\left( 1 + \log \left(\frac{\norm{\Delta \dU_2}_{\Lph{2}}}{\lambda_1^{1/2}\norm{\nabla \dU_2}_{\Lph{2}}}\right)\right)^{1/2}\notag \\
& \leq \frac{\nu}{100}\norm{\Delta \dU_1}_{\Lph{2}}^2 + \frac{c}{\nu} \norm{\nabla \uu_2}_{\Lph{2}}^2 \norm{\nabla \dU_2}_{\Lph{2}}^2 \left( 1 + \log \left(\frac{\norm{\Delta \dU_2}_{\Lph{2}}}{\lambda_1^{1/2}\norm{\nabla \dU_2}_{\Lph{2}}}\right)\right). 
\end{align}
Similarly, the divergence free condition \eqref{div_dU} and the logarithmic inequality \eqref{titi_2} imply 
\begin{align}\label{7_2}
J_{1b2} &= (\uu_2\pp_{yx}\dU_1, \pp_x\dU_1)= (\uu_2\pp_{yy}\dU_2, \pp_y \dU_2)= (\uu_2\pp_{y}\dU_2, \pp_{yy} \dU_2) \notag \\
& \leq c_T \norm{\nabla \uu_2}_{\Lph{2}}\norm{\nabla \dU_2}_{\Lph{2}}\norm{\Delta \dU_2}_{\Lph{2}}\left( 1 + \log \left(\frac{\norm{\Delta \dU_2}_{\Lph{2}}}{\lambda_1^{1/2}\norm{\nabla \dU_2}_{\Lph{2}}}\right)\right)^{1/2}\notag \\
& \leq \frac{\nu}{100}\norm{\Delta \dU_2}_{\Lph{2}}^2 + \frac{c}{\nu} \norm{\nabla \uu_2}_{\Lph{2}}^2 \norm{\nabla \dU_2}_{\Lph{2}}^2 \left( 1 + \log \left(\frac{\norm{\Delta \dU_2}_{\Lph{2}}}{\lambda_1^{1/2}\norm{\nabla \dU_2}_{\Lph{2}}}\right)\right). 
\end{align}
By integration by parts and the divergence free condition \eqref{div_uu}, we also have 
\begin{align*}
J_{1b3} &= (\uu_2\pp_y\dU_1, \pp_{yy}\dU_1) = \frac12 (\uu_2\pp_y(\pp_y \dU_1)^2)\notag \\
& = -\frac 12 (\pp_y\uu_2 (\pp_y \dU_1)^2) = \frac 12 (\pp_x\uu_1(\pp_y \dU_1)^2)  \\ 
&= - (\uu_1\pp_y\dU_1, \pp_{xy}\dU_1)  =(\uu_1\pp_y\dU_1 \pp_{yy} \dU_2)\notag \\
& = - (\pp_y\uu_1\pp_y \dU_1, \pp_y\dU_2) - (\uu_1\pp_{yy}\dU_1\pp_y\dU_2) =: J_{1b31} + J_{1b32}.  
\end{align*}
Following similar argument as above, using the logarithmic inequalities \eqref{titi_1} and \eqref{titi_2}, we can show that 
\begin{align}\label{8_2}
J_{1b31} &= -(\pp_y\uu_1\pp_y \dU_1, \pp_y\dU_2)= -(\pp_y \dU_1\pp_y\uu_1, \pp_y\dU_2)
\notag \\
 & \leq \frac{\nu}{100}\norm{\Delta \dU_1}_{\Lph{2}}^2 + \frac{c}{\nu} \norm{\nabla \uu_1}_{\Lph{2}}^2 \norm{\nabla \dU_2}_{\Lph{2}}^2 \left( 1 + \log \left(\frac{\norm{\Delta \dU_2}_{\Lph{2}}}{\lambda_1^{1/2}\norm{\nabla \dU_2}_{\Lph{2}}}\right)\right), 
\end{align}
and 
\begin{align}\label{9_2}
J_{1b32} & = -(\uu_1\pp_{yy}\dU_1,\pp_y\dU_2) = -(\uu_1\pp_y\dU_2,\pp_{yy}\dU_1) \notag \\
& \leq \frac{\nu}{100}\norm{\Delta \dU_1}_{\Lph{2}}^2 + \frac{c}{\nu} \norm{\nabla \uu_1}_{\Lph{2}}^2 \norm{\nabla \dU_2}_{\Lph{2}}^2 \left( 1 + \log \left(\frac{\norm{\Delta \dU_2}_{\Lph{2}}}{\lambda_1^{1/2}\norm{\nabla \dU_2}_{\Lph{2}}}\right)\right). 
\end{align}

Integration by parts, the divergence free condition \eqref{div_dU} and the boundary conditions imply 
\begin{align*}
J_{1c} = (\dU_1\pp_xu_1 ,-\Delta \dU_1) &= (\dU_1\pp_xu_1, -\pp_{xx}\dU_1) + (\dU_1\pp_xu_1, -\pp_{yy}\dU_1) \notag \\
& = -(\pp_{x}\dU_1\pp_xu_1,  \pp_{y}\dU_2) - (\dU_1\pp_{xx}u_1,  \pp_{y}\dU_2) -(\dU_1\pp_xu_1, \pp_{yy}\dU_1)\notag\\
& =: J_{1c1} + J_{1c2} + J_{1c_3}. 
\end{align*}
By the logarithmic inequality \eqref{titi_1}, we have 
\begin{align}
J_{1c1} & = -(\pp_{x}\dU_1\pp_xu_1,  \pp_{y}\dU_2) \notag \\
& \leq  c \norm{\Delta \dU_1}_{\Lph{2}} \norm{\nabla u_1}_{\Lph{2}}\norm{\nabla \dU_2}_{\Lph{2}}  \left( 1 + \log\left(\dfrac{\norm{\Delta \dU_2}}{\lambda_1^{1/2}\norm{\nabla\dU_2}_{\Lph{2}}}\right)\right)^{1/2}\notag \\
& \leq \dfrac{\nu}{100}\norm{\Delta \dU_1}_{\Lph{2}}^2 + \dfrac{c}{\nu} \norm{\nabla u_1}_{\Lph{2}}^2\norm{\nabla \dU_2}_{\Lph{2}}^2\left( 1 + \log\left(\dfrac{\norm{\Delta \dU_2}}{\lambda_1^{1/2}\norm{\nabla\dU_2}_{\Lph{2}}}\right)\right). 
\end{align}
Applying the logarithmic inequality \eqref{titi_2} and the Poincar\'e inequality yield
\begin{align}
J_{1c2}  &= - (\dU_1\pp_{xx}u_1,  \pp_{y}\dU_2)= -(\dU_1\pp_y\dU_2, \pp_{xx}u_1)\notag \\
&\leq c \norm{\Delta u_1}_{\Lph{2}} \norm{\nabla \dU_1}_{\Lph{2}}\norm{\nabla \dU_2}_{\Lph{2}}   \left( 1 + \log\left(\dfrac{\norm{\Delta \dU_2}}{\lambda^{1/2}\norm{\nabla\dU_2}_{\Lph{2}}}\right)\right)^{1/2}\notag \\
& \leq \dfrac{\nu}{100}\norm{\Delta \dU_1}_{\Lph{2}}^2 +\dfrac{c\,\lambda_1^{-1}  }{\nu} \norm{\Delta u_1}_{\Lph{2}}^2\norm{\nabla \dU_2}_{\Lph{2}}^2\left( 1 + \log\left(\dfrac{\norm{\Delta \dU_2}}{\lambda^{1/2}\norm{\nabla\dU_2}_{\Lph{2}}}\right)\right).
\end{align}
The term $J_{1c_3}$ can be estimated by a similar argument as above. Also, using integration by parts and the divergence free condition \eqref{div_u}, we get
\begin{align*}
J_{2c} : = (\dU_1 \pp_xu_2, -\Delta \dU_2) &=  (\dU_1 \pp_xu_2, -\pp_{xx}\dU_2) + (\dU_1 \pp_xu_2, -\pp_{yy} \dU_2)  \\
&=  (\pp_x\dU_1 \pp_xu_2, \pp_{x}\dU_2) + (\dU_1 \pp_{xx}u_2, \pp_{x}\dU_2)  \\
&\qquad +(\pp_y\dU_1 \pp_xu_2, \pp_{y} \dU_2) + (\dU_1 \pp_{xy}u_2, \pp_{y} \dU_2)\\
&=  (\pp_x\dU_1 \pp_xu_2, \pp_{x}\dU_2) + (\dU_1 \pp_{xx}u_2, \pp_{x}\dU_2)  \\
&\qquad +(\pp_y\dU_1 \pp_xu_2, \pp_{y} \dU_2) - (\dU_1 \pp_{xx}u_1, \pp_{y} \dU_2)\\
& =: J_{2c1}+ J_{2c2} + J_{2c3} + J_{2c4}. 
\end{align*}
The above four  terms can be also estimated using the logarithmic inequality \eqref{titi_1} as shown previously. 

We note that
\begin{align}
(\pp_x\dP, \Delta\dU_1) + (\pp_y\dP, \Delta\dU_2) = 0, 
\end{align}
due to the divergence free condition \eqref{div_dU} and integration by parts.   It follows from the above estimates that on the time interval $[0,\tilde{T})$, we have 
\begin{align*}
& \od{}{t} \norm{\nabla \dU}_{\Lph{2}}^2 + \frac{\nu}{2}\norm{\Delta \dU}_{\Lph{2}}^2 \leq \notag \\
&\quad \frac{c}{\nu} \left(\norm{\nabla u}_{\Lph{2}}^2 + \norm{\nabla \uu}_{\Lph{2}}^2 + \lambda_1^{-1} \norm{\Delta u})_{\Lph{2}}^2\right) \left( 1 + \log \left(\frac{\norm{\Delta \dU_2}_{\Lph{2}}^2}{\lambda_1\norm{\nabla \dU_2}_{\Lph{2}}^2}\right)\right)\norm{\nabla \dU_2}_{\Lph{2}}^2 \notag \\ 
& \quad  - \mu \norm{\nabla \dU_2}_{\Lph{2}}^2. 
\end{align*}
Using the Poincar\'e inequality, we conclude that 
\begin{align}\label{poincare_ineq_nse}
& \od{}{t} \norm{\nabla \dU}_{\Lph{2}}^2 + \frac{\nu\lambda_1}{2}\norm{\nabla\dU_1}_{\Lph{2}}^2  + \beta(t)\norm{\nabla \dU_2}_{\Lph{2}}^2 \leq 0, 
\end{align}
with 
$$ \beta(t) = \frac{\nu\lambda_1}{2}\left[r - \gamma(t) (1+ \log(r))\right] + \mu,$$ 
where we denoted by 
$$ r = \frac{\norm{\Delta \dU_2}_{\Lph{2}}^2}{\lambda_1\norm{\nabla \dU_2}_{\Lph{2}}^2}, $$ 
and 
$$ \gamma (t) = \frac{c}{\nu^2\lambda_1}\left(\norm{\nabla u(t)}_{\Lph{2}}^2 + \norm{\nabla \uu(t)}_{\Lph{2}}^2 + \lambda_1^{-1} \norm{\Delta u(t)})_{\Lph{2}}^2\right). $$ 

On the time interval $[0,\tilde{T})$, we have $\norm{\nabla\uu}_{\Lph{2}}^2 \leq 5{\tilde c}\nu^2\lambda_1G^2e^{G^4}$. Thus, by \eqref{dirichlet_H1} and \eqref{dirichlet_H2}, we have 
\begin{align}\label{J}
\gamma(t) &\leq \frac{c}{\nu^2 \lambda_1} \left(8{\tilde c}\nu^2\lambda_1G^2e^{G^4}+ c\nu^2\lambda_1\left(1+\left(1+ G^2e^{G^4}\right)\left(1+e^{G^4}+G^4e^{G^4}\right)\right)\right)\notag \\
& \leq c G^2\left(1+\left(1+ G^2e^{G^4}\right)\left(1+e^{G^4}+G^4e^{G^4}\right)\right)=: K, 
\end{align}
for all $t\in [0,\tilde{T})$. 
By Lemma \ref{log_prop}, we may conclude that 
\begin{align}
\beta(t) &\geq -\nu \lambda_1 \left(\gamma(t) \log \left(\gamma(t))\right)\right) + \mu \notag \\
& \geq \mu - c \nu\lambda_1 K\log(K). 
\end{align}

Therefore, the assumption \eqref{mu_2D_NSE_1} implies that $\beta(t)>c \nu\lambda_1 K\log(K)>0$ for all $t \in [0,\tilde{T})$. Define $\tilde{\beta}(t) := \min\{\frac{\nu\lambda_1}{2}, \,\beta(t)\} >0$, then we can write \eqref{poincare_ineq_nse} as 
$$ \od{}{t} \norm{\nabla \dU}_{\Lph{2}}^2+ \tilde{\beta}(t)\norm{\nabla \dU}_{\Lph{2}}^2 \leq 0 .$$  Using Gronwall's Lemma, we have 
\begin{align}\label{exponential_decay} 
\norm{\nabla \dU(t)}_{\Lph{2}}^2 \leq \norm{\nabla \dU(0)}_{\Lph{2}}^2 e^{\max\{-\nu\lambda_1, -c \nu\lambda_1 K\log(K)t},
\end{align}
for all $t \in [0,\tilde{T})$. Since $\norm{\nabla \dU(0)}_{\Lph{2}}^2 \leq 2\norm{\nabla u(0)}_{\Lph{2}}^2 + 2\norm{\nabla \uu(0)}_{\Lph{2}}^2$, then by \eqref{jolly} and \eqref{initial_condition_nse}, and due to Remark \ref{t0_remark}, we have 
\begin{align}\label{exp_decay_nse}
\norm{\nabla \dU(t)}_{\Lph{2}}^2\leq  \norm{\nabla\dU(0)}_{\Lph{2}}^2 \leq 4{\tilde c} \nu^2\lambda_1G^2e^{G^4}, 
\end{align}
for all $t \in [0,\tilde{T})$. This implies that $\norm{\nabla \uu(t)}_{\Lph{2}}^2 \leq 6 \tilde{c}\nu^2\lambda_1 G^2e^{G^4}$, for all $t\in [0,\tilde{T})$. This in turn will yield a contradiction since
$$ 7\tilde{c}\nu^2\lambda_1 G^2e^{G^4} = \limsup_{t\rightarrow \tilde{T}^{-}} \norm{\nabla \uu(t)}_{\Lph{2}}^2 \leq 6 \tilde{c}\nu^2\lambda_1 G^2e^{G^4}.$$
This proves that $\tilde{T} = \infty$. Thus, the solution $\dU(t)$ exists globally in time and it satisfies 
\begin{align}\label{exp_decay_nse_2}
\norm{\nabla \dU(t)}_{\Lph{2}}^2 \leq 4\tilde{c}\nu^2\lambda_1 G^2e^{G^4}, 
\end{align}
for all $t\geq 0$.
Following the techniques that were introduced to prove the existence and uniqueness of solutions for the Navier-Stokes equations (see for example,  \cite{Constantin_Foias_1988}, \cite{Temam_1997} and \cite{Temam_2001_Th_Num}), we can show the existence of the solution $ \dU(t)$ of system \eqref{dU_NSE}, that will inherit the estimate \eqref{exp_decay_nse}. Moreover, $\norm{\dU(t)}_{\Lph{2}}^2$ decays exponentially in time and it inherits the inequality \eqref{exponential_decay}. The uniqueness and the well-posedness will follow by a similar argument as above. Since, by assumption, $u(t)$ is a strong solution in the global attractor of the 2D Navier-Stokes equations, then this proves the global existence and the uniqueness of the solution $\uu(t)$ of system \eqref{DA_NSE}, that satisfies $\norm{\nabla \uu(t)}_{\Lph{2}}^2 \leq 7 \tilde{c}\nu^2\lambda_1 G^2e^{G^4}$, for all $t\geq 0$. Moreover, we have  
$$\norm{\nabla u(t) - \nabla \uu(t)}_{\Lph{2}}^2 \rightarrow 0,$$ at an exponential rate as $t\rightarrow\infty$.
\end{proof}
\bigskip 

\subsection{Periodic Boundary Conditions} 

The existence and uniqueness of strong solutions of the data assimilation algorithm \eqref{DA_NSE} with observables that satisfy \eqref{app2}, subject to periodic conditions, as stated below, follows by a similar argument as for the two-dimensional Navier-Stokes equations. See \cite{Azouani_Olson_Titi}, \cite{Constantin_Foias_1988}, \cite{Robinson} and \cite{Temam_1997} for more details. 
\begin{theorem}
Suppose $I_h$ satisfy \eqref{app2} and $\mu>0$ and $h>0$ are chosen such that $\mu c_0^2h^2\leq \nu$, where $c_0$ is the constant in \eqref{app}. If the initial data $\uu_0 \in V$, then the continuous data assimilation system \eqref{DA_NSE}, subject to periodic boundary conditions, possess a unique global strong solution $\uu(t, x,y)= (\uu_1(t, x,y), \uu_2(t , x,y))$ that satisfies 
\begin{align*}
\uu \in C([0,T];V) \cap L^2([0,T];\mathcal{D}(A)), \quad \text{and} \quad \od{\uu}{t} \in L^2([0,T];H). 
\end{align*} 
Moreover, the solution $\uu(t, x,y)$ depends continuously on the initial data $\uu_0$. 
\end{theorem}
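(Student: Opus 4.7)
The plan is to mirror the standard Galerkin--compactness proof of well-posedness for the 2D Navier--Stokes equations in the periodic setting (see, e.g., \cite{Constantin_Foias_1988, Robinson, Temam_1997}), the only novelty being the bounded linear perturbation $-\mu(I_h(\uu_2)-I_h(u_2))\mathbf{e}_2$. A key simplification relative to the Dirichlet analysis of Theorem \ref{th_conv_NS_2} is that periodic boundary conditions afford the orthogonality \eqref{per_orth}, so the trilinear contribution vanishes after pairing with $A\uu$, and no logarithmic or attractor-size restriction on $\mu$ is needed.

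First I would set up the Galerkin approximation. Let $\{w_k\}$ be the periodic Stokes eigenbasis with zero mean and $P_m$ the orthogonal projection onto $H_m := \mathrm{span}\{w_1,\ldots,w_m\}$. The Galerkin system reads
\begin{align*}
\frac{d\uu^m}{dt} + \nu A\uu^m + P_m P_\sigma\bigl[(\uu^m\cdot\nabla)\uu^m\bigr] = P_m f - \mu P_m P_\sigma\bigl[(I_h(\uu_2^m)-I_h(u_2))\mathbf{e}_2\bigr],
\end{align*}
with $\uu^m(0)=P_m\uu_0$. Since $I_h$ is linear and bounded, the right-hand side is locally Lipschitz in $\uu^m$, so Picard--Lindel\"of provides a unique local $C^1$ solution. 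Taking the $L^2$ inner product with $-\Delta\uu^m$, using \eqref{per_orth} to annihilate the nonlinearity, and splitting
\begin{align*}
\mu\bigl(I_h(\uu_2^m)-I_h(u_2),\Delta\uu_2^m\bigr) = \mu(I_h(\uu_2^m)-\uu_2^m,\Delta\uu_2^m) - \mu\|\nabla\uu_2^m\|_{L^2}^2 - \mu(I_h(u_2),\Delta\uu_2^m),
\end{align*}
I would apply Young's inequality together with \eqref{app2_F} and the assumption $\mu c_0^2 h^2\leq\nu$ using the bookkeeping of estimate \eqref{1_2} in the proof of Theorem \ref{th_conv_NS_2}. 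This absorbs the $\|\Delta\uu_2^m\|^2$ contributions into $\nu\|\Delta\uu^m\|^2$ and leaves a residual coercive term $-\tfrac{\mu}{2}\|\nabla\uu_2^m\|^2$, yielding a differential inequality of the schematic form
\begin{align*}
\frac{d}{dt}\|\nabla\uu^m\|_{L^2}^2 + c_1\nu\|\Delta\uu^m\|_{L^2}^2 + \mu\|\nabla\uu_2^m\|_{L^2}^2 \leq C\nu^{-1}\bigl(\|f\|_{L^2}^2 + \mu^2\|I_h(u_2)\|_{L^2}^2\bigr).
\end{align*}
Gronwall then delivers uniform-in-$m$ bounds on $\uu^m$ in $L^\infty(0,T;V)\cap L^2(0,T;\mathcal{D}(A))$ on every finite interval, thereby ruling out finite-time blow-up of the Galerkin ODE, and a bound on $d\uu^m/dt$ in $L^2(0,T;H)$ is read off the equation in the standard way.

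Aubin--Lions compactness then extracts a subsequence converging weak-$\ast$ in $L^\infty(0,T;V)$, weakly in $L^2(0,T;\mathcal{D}(A))$, and strongly in $L^2(0,T;H)$; passage to the limit in the weak formulation is routine since the feedback term is linear in its argument. Uniqueness (and hence continuous dependence on $\uu_0$) follows by taking two strong solutions $\uu,\tilde{\uu}$, setting $w=\uu-\tilde{\uu}$, testing with $w$, bounding the bilinear difference $((w\cdot\nabla)\tilde{\uu},w)$ via Ladyzhenskaya's inequality \eqref{L4_to_H1}, and controlling $-\mu(I_h(w_2),w_2)$ by the same splitting and $\mu c_0^2 h^2\leq\nu$ as above; Gronwall on $\|w(t)\|_{L^2}^2$ then closes the argument.

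The main obstacle is the careful Young's-inequality bookkeeping at the $H^1$ level: \eqref{app2_F} produces both a $\|\nabla\uu_2^m\|^2$ piece, which must be absorbed by the coercive term $-\mu\|\nabla\uu_2^m\|^2$ arising from $(\uu_2^m,\Delta\uu_2^m)$, and a $\|\Delta\uu_2^m\|^2$ piece, which must be absorbed by the viscous dissipation $\nu\|\Delta\uu^m\|^2$. The hypothesis $\mu c_0^2 h^2\leq\nu$ is exactly what permits both absorptions to close simultaneously; once it holds, the remainder of the argument is a routine adaptation of the standard 2D Navier--Stokes theory, which is why the authors present the statement without giving the proof in detail.
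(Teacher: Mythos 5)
The paper gives no detailed proof of this theorem --- it only remarks that the result ``follows by a similar argument as for the two-dimensional Navier-Stokes equations'' --- and your Galerkin scheme with an enstrophy-level a priori estimate is exactly the intended route. Your treatment of the existence part is sound: the orthogonality \eqref{per_orth} does kill the trilinear term after pairing with $-\Delta\uu^m$, and the splitting of the nudging term together with \eqref{app2_F} and $\mu c_0^2h^2\le\nu$ reproduces the bookkeeping of \eqref{1_2}, leaving a coercive $-\tfrac{\mu}{2}\norm{\nabla \uu_2^m}_{\Lph{2}}^2$ and at most $\tfrac{\nu}{2}\norm{\Delta\uu_2^m}_{\Lph{2}}^2$ to be absorbed by the dissipation. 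The resulting uniform bounds in $L^\infty(0,T;V)\cap L^2(0,T;\mathcal{D}(A))$ and the Aubin--Lions passage to the limit are all correct.

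The gap is in your uniqueness/continuous-dependence step. You propose to test the difference equation with $w$ and control $-\mu(I_h(w_2),w_2)$ ``by the same splitting and $\mu c_0^2h^2\le\nu$ as above.'' But that splitting only closes at the $H^1$ level: for type II interpolants, \eqref{app2_F} gives $\norm{w_2-I_h(w_2)}_{\Lph{2}}\le \tfrac{c_0h}{\sqrt2}\norm{\nabla w_2}_{\Lph{2}}+\tfrac{c_0^2h^2}{2}\norm{\Delta w_2}_{\Lph{2}}$, so at the $L^2$ energy level you are left with a term of order $\mu c_0^2h^2\norm{\Delta w_2}_{\Lph{2}}\norm{w_2}_{\Lph{2}}\le \nu\norm{\Delta w_2}_{\Lph{2}}\norm{w_2}_{\Lph{2}}$, and the only dissipation available is $\nu\norm{\nabla w}_{\Lph{2}}^2$, which cannot absorb $\norm{\Delta w_2}_{\Lph{2}}$. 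One cannot rescue this by Young's inequality either: the resulting differential inequality has the form $\dot y\le a(t)y+b(t)y^{1/2}$ with $y=\norm{w}_{\Lph{2}}^2$ and $y(0)=0$, which does not force $y\equiv 0$. (Note that for type II interpolants such as nodal values, $I_h$ is \emph{not} bounded $\Hph{1}\to\Lph{2}$, so there is no way to avoid the $\norm{\Delta w_2}_{\Lph{2}}$ contribution.) The fix is to run the uniqueness argument at the $V$ level: test with $-\Delta w$, use \eqref{per_orth} and \eqref{per_orth_2} to reduce the nonlinear contribution to $((w\cdot\nabla)w,\Delta\tilde{\uu})$, bound it by Ladyzhenskaya and Young against $\tfrac{\nu}{4}\norm{\Delta w}_{\Lph{2}}^2$ plus $\tfrac{c}{\nu^{1/3}\lambda_1^{1/3}}\norm{\Delta\tilde{\uu}}_{\Lph{2}}^{4/3}\norm{\nabla w}_{\Lph{2}}^2$ (an integrable Gronwall weight since $\tilde{\uu}\in L^2(0,T;\mathcal{D}(A))$), and handle the feedback term exactly as in \eqref{1_2}. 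This is precisely why the paper's entire type II analysis is conducted in the enstrophy norm.
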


\begin{theorem}\label{th_conv_NS_3}
Suppose that $I_h$ satisfy the approximation property \eqref{app2} and $u(t,x,y)$ $=$ $(u_1(t,x,y),u_2(t,x,y))$ is a strong solution in the global attractor of \eqref{2D_NSE_com} subject to periodic boundary conditions. Let $\uu(t,x,y)$ $=$ $(\uu_1(t,x,y),\uu_2(t,x,y))$ be a strong solution of \eqref{DA_NSE}, subject to periodic boundary conditions. If $\mu>0$ is chosen large enough such that 
\begin{align}\label{mu_2D_NSE_per}
\mu \geq 2 c\nu\lambda_1(G^2+ G^3), 
\end{align}
and $h>0$ is chosen small enough such that $\mu c_0^2h^2\leq \nu $, then, $\norm{\nabla u(t)-\nabla \uu(t)}_{\Lph{2}}^2$ $\rightarrow 0$, at an 
exponential rate, as $t \rightarrow \infty$.
\end{theorem}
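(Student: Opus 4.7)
The plan is to follow the argument of Theorem~\ref{th_conv_NS_2} (Dirichlet, type II), exploiting two simplifications afforded by periodic boundary conditions. First, global well-posedness of $\uu$ is already guaranteed by the preceding theorem, so no simultaneous bootstrap of $\norm{\nabla\uu(t)}_{\Lph{2}}$ is required. Second, the periodic a~priori bounds \eqref{per_est}--\eqref{jolly} are polynomial in $G$, free of the $e^{G^4}$ factors that produce the double-exponential constant $K$ in the Dirichlet case.

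Setting $\dU=u-\uu$ and $\dP=p-\p$, I would take the $\Lph{2}$ inner product of the components of \eqref{dU_NSE} with $-\Dd\dU_1$ and $-\Dd\dU_2$ separately and add them. Periodicity makes the pressure drop out. The eight trilinear terms $J_{1a},\ldots,J_{2d}$ can then be estimated exactly as in the proof of Theorem~\ref{th_conv_NS_2}: whenever $\partial_x\dU_1$ appears I would convert it to $-\partial_y\dU_2$ via \eqref{div_dU}, and I would invoke the logarithmic inequalities \eqref{titi_1}--\eqref{titi_3} so that each of the eight bounds carries a factor $\norm{\nabla\dU_2}_{\Lph{2}}^2$ rather than $\norm{\nabla\dU}_{\Lph{2}}^2$. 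The nudging contribution is handled exactly as in \eqref{1_2}; under $\mu c_0^2 h^2\le\nu$ it produces
\[
-\mu(I_h(\dU_2),-\Dd\dU_2)\;\le\;-\tfrac{\mu}{2}\norm{\nabla\dU_2}_{\Lph{2}}^2+\tfrac{\nu}{2}\norm{\Dd\dU_2}_{\Lph{2}}^2.
\]

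Assembling these estimates and invoking Lemma~\ref{log_prop} as in the Dirichlet proof leads to an inequality of the shape of \eqref{poincare_ineq_nse},
\[
\od{}{t}\norm{\nabla\dU}_{\Lph{2}}^2+\tfrac{\nu\lambda_1}{2}\norm{\nabla\dU_1}_{\Lph{2}}^2+\beta(t)\norm{\nabla\dU_2}_{\Lph{2}}^2\le 0,
\]
with $\beta(t)\ge \mu-\tfrac{\nu\lambda_1}{2}\gamma(t)\log\gamma(t)$ and $\gamma(t)=\tfrac{c}{\nu^2\lambda_1}(\norm{\nabla u}_{\Lph{2}}^2+\norm{\nabla\uu}_{\Lph{2}}^2+\lambda_1^{-1}\norm{\Dd u}_{\Lph{2}}^2)$. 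Using \eqref{per_est} and the crossed-product bound $\norm{\nabla u}_{\Lph{2}}\norm{\Dd u}_{\Lph{2}}\le c\nu^2\lambda_1^{3/2}G(1+G)^2$ (from \eqref{per_est} and \eqref{jolly}) in place of $\norm{\Dd u}_{\Lph{2}}^2$ in every term responsible for the Dirichlet $\lambda_1^{-1}\norm{\Dd u}^2$ factor, the periodic analogue of $\gamma(t)\log\gamma(t)$ is bounded by $c(G^2+G^3)$, matching the threshold in \eqref{mu_2D_NSE_per}. The $\norm{\nabla\uu}$ contribution to $\gamma$ can be controlled either by a standalone energy estimate on the nudged equation, yielding $\norm{\nabla\uu(t)}_{\Lph{2}}^2\le c\nu^2\lambda_1 G^2$ after a transient by mimicking the derivation of \eqref{per_est}, or by bootstrapping via $\norm{\nabla\uu}\le\norm{\nabla u}+\norm{\nabla\dU}$ together with the decay under construction.

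Under hypothesis \eqref{mu_2D_NSE_per} this gives $\beta(t)\ge \mu/2>0$ beyond the transient of Remark~\ref{t0_remark}, so the uniform Gronwall inequality (Lemma~\ref{gen_gron_2}) applied to $\tilde\beta(t)=\min\{\nu\lambda_1/2,\beta(t)\}$ delivers $\norm{\nabla u(t)-\nabla\uu(t)}_{\Lph{2}}^2\to 0$ exponentially. The step I expect to be most delicate is keeping the exponent of $G$ down to $G^3$: this requires systematically preferring $\norm{\nabla u}\norm{\Dd u}$ to $\norm{\Dd u}^2$ in the estimates of the $J_{ic2}$-type terms, so that $\gamma$ remains of order $G(1+G)^2$ rather than $(1+G)^4$. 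The supplementary control of $\norm{\nabla\uu}_{\Lph{2}}$, absent from the Dirichlet argument (where it was established simultaneously through bootstrap), is the other technical ingredient to organize.
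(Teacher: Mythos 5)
Your route is genuinely different from the paper's, and as written it does not reach the stated threshold \eqref{mu_2D_NSE_per}. The paper does not redo the eight-term Dirichlet computation at all: it adds the two equations \emph{before} estimating and invokes the periodic orthogonality identity \eqref{per_orth_2}, which annihilates every trilinear contribution except $\left((\dU\cdot\nabla)\dU,\Delta u\right)$. Two consequences follow that your plan has to work hard to reproduce: the resulting $\gamma(t)$ contains only $\norm{\Delta u}_{\Lph{2}}^2$, so no supplementary control of $\norm{\nabla\uu}$ is needed; and the condition on $\mu$ is verified through the \emph{time-averaged} hypotheses of Lemma \ref{gen_gron_2}, using $\int_t^{t+\tau}\norm{\Delta u(s)}_{\Lph{2}}^2\,ds = O(\nu\lambda_1 G^2)$ from \eqref{per_est} for the quadratic factor and the pointwise bound \eqref{jolly} only inside the logarithm, which is where the factor $(1+G)$ and hence $c(G^2+G^3)$ come from.

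The concrete gap in your version is the step ``$\beta(t)\ge\mu/2$ beyond the transient.'' That is a pointwise lower bound, and it would require $\gamma(t)\log\gamma(t)\le c(G^2+G^3)$ pointwise in time. The only pointwise enstrophy bound available is \eqref{jolly}, which gives $\gamma\le c(1+G)^4$ and hence $\gamma\log\gamma$ of order $(1+G)^4\log(1+G)$, far above the stated threshold. Your proposed repair --- replacing $\norm{\Delta u}_{\Lph{2}}^2$ by the crossed product $\norm{\nabla u}_{\Lph{2}}\norm{\Delta u}_{\Lph{2}}$ --- does not materialize: in the $J_{1c2}$-type estimates the factor $\norm{\Delta u}_{\Lph{2}}$ enters linearly through \eqref{titi_2} and is then necessarily \emph{squared} by the Young's inequality that absorbs $\norm{\Delta\dU_1}_{\Lph{2}}$ into the dissipation, and even if the crossed product were available it would only give $G^3\log G$ pointwise, still exceeding $G^2+G^3$. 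The fix is to abandon the pointwise bound and verify only $\liminf_{t\to\infty}\int_t^{t+\tau}\beta(s)\,ds>0$ via $\int\gamma\log\gamma\le\sup(1+\log\gamma)\cdot\int\gamma$, which is exactly what the paper does. Finally, your auxiliary bound $\norm{\nabla\uu(t)}_{\Lph{2}}^2\le c\nu^2\lambda_1 G^2$ for the nudged system is plausible but is an additional lemma you would still have to prove; the paper's use of \eqref{per_orth_2} renders it unnecessary.
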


\begin{proof} 
The proof follows by similar means as in \cite{Azouani_Olson_Titi}. Taking the inner product of \eqref{dU_1} and \eqref{dU_2} with $-\Delta \dU_1$ and $-\Delta \dU_2$, respectively, adding the two equations and then using the orthogonality property \eqref{per_orth_2}, we get
\begin{align*}
\frac{1}{2} \od{}{t}\norm{\nabla \dU}_{\Lph{2}}^2 + \nu \norm{\Delta \dU}_{\Lph{2}}^2 \leq \abs{\left((\dU\cdot\nabla)\dU, \Delta u\right)} - \mu(I_h(\dU_2), -\Delta \dU_2), 
\end{align*}
where $u$ is the reference solution of the 2D Navier-Stokes equations \eqref{2D_NSE_com}. 

Notice that, thanks to the divergence free condition \eqref{div_dU} and the boundary conditions, we have 
\begin{align*}
\left((\dU\cdot\nabla)\dU, \Delta u\right) &= \left(\dU_1\pp_x\dU, \Delta u\right) +  \left(\dU_2\pp_y\dU, \Delta u\right)\\
& = \left(\dU_1\pp_x\dU_1, \Delta u_1\right) + \left(\dU_1\pp_x\dU_2, \Delta u_2\right) + \left(\dU_2\pp_y\dU, \Delta u\right)\\
& = -\left(\dU_1\pp_y\dU_2, \Delta u_1\right) + \left(\dU_1\pp_x\dU_2, \Delta u_2\right) + \left(\dU_2\pp_y\dU, \Delta u\right)\\
& =: J_{1a} + J_{1b} + J_2. 
\end{align*}

Using the logarithmic inequality \eqref{titi_2} and the Poincar\'e inequality \eqref{poincare}, it follows that 
\begin{align*}
\abs{J_{1a}} &= \abs{\left(\dU_1\pp_y\dU_2, \Delta u_1\right)} \notag \\
& \leq c_T\norm{\nabla \dU_1}_{\Lph{2}} \norm{\nabla \dU_2}_{\Lph{2}}  \norm{\Delta u_1}_{\Lph{2}}\left(1+ \log\left(\frac{\norm{\Delta \dU_2}_{\Lph{2}}}{\lambda_1^{1/2}\norm{\nabla \dU_2}_{\Lph{2}}}\right)\right)^{1/2}. \notag \\
& \leq c_T\lambda_1^{-1/2}\norm{\Delta \dU_1}_{\Lph{2}} \norm{\nabla \dU_2}_{\Lph{2}}  \norm{\Delta u_1}_{\Lph{2}}\left(1+ \log\left(\frac{\norm{\Delta \dU_2}_{\Lph{2}}}{\lambda_1^{1/2}\norm{\nabla \dU_2}_{\Lph{2}}}\right)\right)^{1/2}. 
\end{align*}
By Young's inequality we get 
\begin{align}\label{11_per}
\abs{J_{1a}} \leq \frac{\nu}{4}\norm{\Delta \dU_1}_{\Lph{2}}^2 + \frac{c}{\nu\lambda_1}  \norm{\Delta u_1}_{\Lph{2}}^2\norm{\nabla \dU_2}_{\Lph{2}}^2 \left(1+ \log\left(\frac{\norm{\Delta \dU_2}_{\Lph{2}}^2}{\lambda_1\norm{\nabla \dU_2}_{\Lph{2}}^2}\right)\right),
\end{align}
for some positive dimensionless constant $c$. Similarly, by \eqref{titi_2}, we can show that 
\begin{align}
\abs{J_{1b}} \leq \frac{\nu}{4}\norm{\Delta \dU_1}_{\Lph{2}}^2 + \frac{c}{\nu\lambda_1}  \norm{\Delta u_2}_{\Lph{2}}^2\norm{\nabla \dU_2}_{\Lph{2}}^2 \left(1+ \log\left(\frac{\norm{\Delta \dU_2}_{\Lph{2}}^2}{\lambda_1\norm{\nabla \dU_2}_{\Lph{2}}^2}\right)\right) \label{22_per}, 
\end{align}
and by \eqref{titi_3}, we have 
\begin{align}
\abs{J_{2}} \leq \frac{\nu}{4}\norm{\Delta \dU}_{\Lph{2}}^2 + \frac{c}{\nu\lambda_1}  \norm{\Delta u}_{\Lph{2}}^2\norm{\nabla \dU_2}_{\Lph{2}}^2 \left(1+ \log\left(\frac{\norm{\Delta \dU_2}_{\Lph{2}}^2}{\lambda_1\norm{\nabla \dU_2}_{\Lph{2}}^2}\right)\right), 
\end{align}
for some positive dimensionless constant $c$. By \eqref{1_2}, we also have
\begin{align}\label{44_per}
-\mu(I_h(\dU_2^n),-\Delta\dU_2^n) &\leq \frac{\nu}{2} \norm{\Delta \dU_2^n}_{\Lph{2}}^2 - \frac{\mu}{2} \norm{\nabla \dU_2^n}_{\Lph{2}}^2. 
\end{align}

Therefore, by the estimates \eqref{11_per}--\eqref{44_per}, we obtain 
\begin{align*}
& \od{}{t} \norm{\nabla\dU}_{\Lph{2}}^2 + \frac\nu2\norm{\Delta \dU}_{\Lph{2}}^2 \leq \notag \\
& \qquad \qquad \left(\frac{ c}{\nu}\norm{\Delta u}_{\Lph{2}}^2\left(1 + \log \left(\frac{\norm{\Delta \dU_2}_{\Lph{2}}}{\lambda_1^{1/2} \norm{\nabla\dU_2}_{\Lph{2}}}\right)\right)-\mu\right)\norm{\nabla\dU_2}_{\Lph{2}}^2. 
\end{align*}
Using the Poincar\'e inequality \eqref{poincare}, we may rewrite the above inequality as 
\begin{align}\label{conv_est_1_per}
& \od{}{t} \norm{\nabla\dU}_{\Lph{2}}^2 + \frac{\nu\lambda_1}{4}\norm{\nabla\dU}_{\Lph{2}}^2 +  \frac{\nu\lambda_1}{4}\frac{\norm{\Delta \dU_2}_{\Lph{2}}^2}{\lambda_1\norm{\nabla\dU_2}_{\Lph{2}}^2}\norm{\nabla\dU_2}_{\Lph{2}}^2\leq \notag \\  
& \qquad \qquad \left(\frac{ c}{\nu\lambda_1}\norm{\Delta u}_{\Lph{2}}^2\left(1 + \log \left(\frac{\norm{\Delta \dU_2}_{\Lph{2}}^2}{\lambda_1\norm{\nabla\dU_2}_{\Lph{2}}^2}\right)\right)-\mu\right)\norm{\nabla\dU_2}_{\Lph{2}}^2,  
\end{align}
or
\begin{align}\label{conv_est_2_per}
& \od{}{t} \norm{\nabla\dU}_{\Lph{2}}^2 + \frac{\nu\lambda_1}{4}\norm{\Delta\dU}_{\Lph{2}}^2 +  \frac{\nu\lambda_1}{4}\left(\phi(r(t)) +\frac{4\mu}{\nu\lambda_1}\right)\norm{\nabla\dU_2}_{\Lph{2}}^2 \leq 0, 
\end{align}
where we denoted by 
$$\phi(r(t)) := r(t) - \gamma(t)(1+\log(r(t)),$$
$$ r(t): = \frac{\norm{\Delta \dU_2}_{\Lph{2}}^2}{\lambda_1 \norm{\nabla\dU_2}_{\Lph{2}}^2}, \quad \gamma(t) : = \frac{c}{\nu^2\lambda_1^2}\norm{\Delta u}_{\Lph{2}}^2. $$
Now, Lemma \ref{log_prop} implies that
$$\phi(r(t)) \geq - \gamma \log(\gamma) = - \frac{c}{\nu^2\lambda_1^2}\norm{\Delta u}_{\Lph{2}}^2\log\left(\frac{c}{\nu^2\lambda_1^2}\norm{\Delta u}_{\Lph{2}}^2\right).$$
Setting 
$$\beta(t) := \mu -  \frac{c}{\nu\lambda_1}\norm{\Delta u}_{\Lph{2}}^2\log\left(\frac{c}{\nu^2\lambda_1^2}\norm{\Delta u}_{\Lph{2}}^2\right), $$
we have 
\begin{align}
& \od{}{t} \norm{\nabla\dU}_{\Lph{2}}^2 + \frac{\nu\lambda_1}{4}\norm{\nabla\dU}_{\Lph{2}}^2  + \beta(t) \norm{\nabla\dU_2}_{\Lph{2}}^2 \leq 0. 
\end{align}

We may conclude that 
\begin{align*}
\od{}{t} \norm{\nabla\dU}_{\Lph{2}}^2 +\min\{\frac{\nu\lambda_1}{4}, \,\beta(t)\}\norm{\nabla\dU}^2_{\Lph{2}}\leq 0. 
\end{align*}
Taking $\tau = (\nu\lambda_1)^{-1}$ in Proposition \ref{unif_bounds_2D_NSE}, using \eqref{per_est} and \eqref{jolly}, and due to Remark \ref{t0_remark}, we conclude that
\begin{align}\label{major_est_1}
\frac{c}{\nu\lambda_1}\int_t^{t+\tau}&\norm{\Delta u(s)}_{\Lph{2}}^2\log\left(\frac{c}{\nu^2\lambda_1^2}\norm{\Delta u(s)}_{\Lph{2}}^2\right)\,ds \notag\\
  &\leq  \frac{c}{\nu\lambda_1}(1 + G) \int_t^{t+\tau} \norm{\Delta u(s)}_{\Lph{2}}^2\,ds \notag \\
 & \leq c(1 + G) G^2 = c(G^2+ G^3), 
\end{align} 
for all $t\geq 0$. Therefore, the assumption \eqref{mu_2D_NSE_per} implies that 
\begin{align}
\liminf_{t\rightarrow\infty} \int_t^{t+\tau} \beta(s)\,ds \geq \frac{\mu\tau}{2}>0, \quad \text{and} \quad \limsup_{t\rightarrow\infty} \int_t^{t+\tau} \beta(s)\,ds \leq\frac{3\mu\tau}{2} < \infty. 
\end{align}
Define $\tilde{\beta}(t) := \min\{\frac{\nu\lambda_1}{4}, \,\beta(t)\}$, then $\tilde{\beta}(t)$ satisfies the condition \eqref{uniform_conditions}. By the uniform Gronwall's  lemma \eqref{gen_gron_2}, we obtain 
$$\norm{\nabla \dU}_{\Lph{2}}^2= \norm{\nabla u(t) - \nabla \uu(t)}_{\Lph{2}}^2 \rightarrow 0,$$ at an exponential rate as $t\rightarrow\infty$.
\end{proof}

\bigskip
%-----------------------------------------------------------------------
\section{Appendix A}
%-----------------------------------------------------------------------
In this appendix, we will prove estimate \eqref{dirichlet_H2} following the calculations in \cite{Robinson}. The proof is formal. It can be done rigorously using the Galerkin approximation and then passing to the limit.

It follows from the 2D Navier-Stokes equations \eqref{2D_NSE_com} that 
\begin{align*}
\norm{\pd{u}{t}}_{\Lph{2}} \leq \nu \norm{\Delta u}_{\Lph{2}} + \norm{(u\cdot\nabla)u}_{\Lph{2}} + \norm{f}_{\Lph{2}}. 
\end{align*}
Using the Ladyzhenskaya inequality \eqref{L4_to_H1}, we can show  that if $u\in D(A)$ then 
\begin{align}\label{non_est}
\norm{(u\cdot\nabla)u}_{\Lph{2}} \leq c_1 \norm{u}_{\Lph{2}}^{1/2}\norm{\nabla u}_{\Lph{2}}\norm{\Delta u}_{\Lph{2}}^{1/2}. 
\end{align}  
Using Young's inequality, we have 
\begin{align}\label{inequality}
\norm{\pd{u}{t}}_{\Lph{2}} &\leq \frac{3\nu}{2} \norm{\Delta u}_{\Lph{2}} + \frac{c_1^2}{\nu}\norm{u}_{\Lph{2}}\norm{\nabla u}_{\Lph{2}}^2+ \norm{f}_{\Lph{2}}. 
\end{align}
The estimates \eqref{dirichlet_L2}, \eqref{dirichlet_H1} and the definition of the Grashof number \eqref{Grashof_2} imply that there exists a time $t_0>0$ such that for $t\geq t_0$
\begin{align}
\norm{\pd{u}{t}}_{\Lph{2}}^2 \leq c\nu^2 \norm{\Delta u}_{\Lph{2}}^2 + cG^2 \left(\nu^2\lambda_1G^2e^{G^4}\right)^2 + c \nu^4\lambda_1^2G^2, 
\end{align}
for some positive non-dimensional constant $c$. 

Moreover, integrating inequality \eqref{inequality} on the time interval $[t+\tau]$ for some $\tau>0$ and using the definition of the Grashof number \eqref{Grashof_2} yield
\begin{align*}
&\int_t^{t+\tau}\norm{\pd{u}{t}(s)}_{\Lph{2}}^2\,ds \notag \\
&\leq c\left(\nu^2 \int_{t}^{t+\tau}\norm{\Delta u(s)}_{\Lph{2}}^2\,ds + \frac{1}{\nu}\int_t^{t+\tau}\norm{u}_{\Lph{2}}^2\norm{\nabla u}_{\Lph{2}}^4\, ds + \tau \nu^4\lambda_1^2G^2\right),
\end{align*}
for all $t\geq t_0$. 

Taking $\tau = (\nu\lambda_1)^{-1}$ in Proposition \ref{unif_bounds_2D_NSE} and using the estimates \eqref{dirichlet_L2}, \eqref{dirichlet_H1} and \eqref{dirichlet_int_H2}, we get
\begin{align}\label{time_derivative_int}
&\int_{t}^{t+\tau}\norm{\pd{u}{t}(s)}_{\Lph{2}}^2\,ds \notag\\
 &\leq c\left(\nu^2 \int_{t}^{t+\tau}\norm{\Delta u(s)}_{\Lph{2}}^2\,ds + \nu G^2\int_t^{t+\tau}\norm{\nabla u}_{\Lph{2}}^4\, ds + \tau \nu^4\lambda_1^2G^2\right)\notag \\
& \leq c\left(\nu^2 \int_{t}^{t+\tau}\norm{\Delta u(s)}_{\Lph{2}}^2\,ds + \nu^2\lambda_1G^4e^{G^4}\int_t^{t+\tau}\norm{\nabla u}_{\Lph{2}}^2\, ds + \tau \nu^4\lambda_1^2G^2\right)\notag\\
& \leq c\left(\nu^3\lambda_1G^2(e^{G^4}+\tau\nu\lambda_1) + \nu^3\lambda_1(1+\tau\nu\lambda_1)G^6e^{G^4}+ \tau \nu^4\lambda_1^2G^2\right)\notag\\
& \leq c\nu^3\lambda_1G^2\left((1+e^{G^4} + G^4 e^{G^4}\right).
\end{align}

We now differentiate the equations \eqref{2D_NSE_com} w.r.t. to $t$ and take the inner product with $\pd{u}{t}$ to obtain 
\begin{align*}
\frac{1}{2} \od{}{t} \norm{\pd{u}{t}}_{\Lph{2}}^2 + \nu\norm{\nabla \pd{u}{t}}_{\Lph{2}}^2 &\leq \abs{\left(\left(\pd{u}{t} \cdot\nabla\right)u, \pd{u}{t}\right)}\\
&\leq c_L\norm{\nabla u}_{\Lph{2}}\norm{\pd{u}{t}}_{\Lph{2}}\norm{\nabla\pd{u}{t}}_{\Lph{2}}\notag \\
&\leq \frac{\nu}{2} \norm{\nabla\pd{u}{t}}_{\Lph{2}}^2 + \frac{c_L^2}{2\nu}\norm{\nabla u}_{\Lph{2}}^2\norm{\pd{u}{t}}_{\Lph{2}}^2, 
\end{align*}
where we used the Ladyzhenskaya inequality \eqref{L4_to_H1} and Young's inequality in the last two steps. Integrating the inequality on the time interval $[s, t+\tau]$ and using \eqref{dirichlet_H1} we get 
\begin{align*}
\norm{\pd{u}{t}(t+\tau)}_{\Lph{2}}^2 &\leq \norm{\pd{u}{t}(s)}_{\Lph{2}}^2 + c\nu \lambda_1G^2e^{G^4} \int_{s}^{t+\tau}\norm{\pd{u}{t}(l)}_{\Lph{2}}^2\ dl,
\end{align*}
for all $t_0\leq t\leq s\leq t+\tau$. Integrating once again w.r.t. to $s$ on the time interval $[t, t+\tau]$ and using \eqref{time_derivative_int} yield 
\begin{align*}
\tau \norm{\pd{u}{t}(t+\tau)}_{\Lph{2}}^2 &\leq \left(1+ c\nu\lambda_1\tau G^2e^{G^4}\right)\int_{t}^{t+\tau}\norm{\pd{u}{t}(s)}_{\Lph{2}}^2\ ds\notag \\
& \leq  c\nu^3\lambda_1G^2 \left(1+ c\nu\lambda_1\tau G^2e^{G^4}\right)\left(1+e^{G^4} + G^4e^{G^4}\right).
\end{align*}
Since $\tau=(\nu\lambda_1)^{-1}$, then
\begin{align*}
\norm{\pd{u}{t}(t+\tau)}_{\Lph{2}}^2 \leq c\nu^4\lambda_1^2G^2\left(1+G^2e^{G^4}\right)\left(1+e^{G^4} + G^4e^{G^4}\right),
\end{align*}
for all $t\geq t_0$. We can redefine $t_0$ to be large enough such that for all $t\geq t_0$
\begin{align}\label{time_derivative}
\norm{\pd{u}{t}(t)}_{\Lph{2}}^2 \leq c\nu^4\lambda_1^2G^2\left(1+G^2e^{G^4}\right)\left(1+e^{G^4} + G^4e^{G^4}\right). 
\end{align}

The 2D Navier-Stokes equations \eqref{2D_NSE_com} imply that 
\begin{align*}
\nu\norm{\Delta u}_{\Lph{2}}\leq \norm{\pd{u}{t}}_{\Lph{2}} + \norm{(u\cdot\nabla)u}_{\Lph{2}} + \norm{f}_{\Lph{2}}. 
\end{align*}
This, using the estimate \eqref{time_derivative}, the nonlinearity estimate \eqref{non_est}, the estimates \eqref{dirichlet_L2} and \eqref{dirichlet_H1} and the definition of the Grashof number \eqref{Grashof_2}, we can conclude that for $t\geq t_0$
\begin{align}\label{e_finale}
\norm{\Delta u(t)}_{\Lph{2}}^2 \leq {\tilde c}\nu^2\lambda_1^2G^2\left(1+\left(1+G^2e^{G^4}\right)\left(1+e^{G^4} + G^4e^{G^4}\right)\right),
\end{align}
for some positive non-dimensional constant ${\tilde c}$.

 \begin{remark} 
One can also follow the argument in \cite{Constantin_Foias_1988} using the analyticity of the solution $u(t)$, in time, and the Cauchy integral formula to derive an estimate on $\norm{\pd{u}{t}}_{\Lph{2}}$ . The estimate on $\norm{\Delta u}_{\Lph{2}}$ will then follow by a similar argument as presented above and bounded by a similar estimate as in \eqref{e_finale}.
\end{remark} 

\bigskip
%-----------------------------------------------------------------------
\section*{Acknowledgements}
The work of A.F. is supported in part by NSF grant  DMS-1418911. The work of E.L. is supported  by the ONR grant N001614WX30023. The work of  E.S.T.  is supported in part by a grant of the ONR and the NSF grants DMS-1109640 and DMS-1109645.

\bigskip
%---------------------------------------------------------------

%---------------------------------------------------------------------
\end{document}